\newtheorem{defn}{Definition}[section]
\newtheorem{prop}{Proposition}[section]
\newtheorem{thm}{Theorem}[section]
\newtheorem{lem}{Lemma}[section]
\newtheorem{rem}{\bf Remark}[section]
\numberwithin{equation}{section}
\newcommand*{\dif}{\mathop{}\!\mathrm{d}}
\begin{document}

\title{ High pointwise emergence and Katok's conjecture for systems with non-uniform structure
 \footnotetext {* Corresponding author}
  \footnotetext {2010 Mathematics Subject Classification: 37B10, 37B40.}}
\author{Yong Ji$^{1}$, Ercai Chen$^{2}$, Zijie Lin$^{2,*}$\\ 
  \small  1 Department of Mathematics, Ningbo University, Ningbo 315211, Zhejiang, P.R. China\\
  \small   2 School of Mathematical Sciences and Institute of Mathematics, Nanjing Normal University,\\
  \small   Nanjing 210023, Jiangsu, P.R.China\\
      \small    e-mail: imjiyong@126.com, ecchen@njnu.edu.cn, zjlin137@126.com.
}
\date{}
\maketitle

\begin{center}
 \begin{minipage}{120mm}
{\small {\bf Abstract.} 
Recently, Kiriki, Nakano and Soma introduced a concept called pointwise emergence as a new quantitative perspective into the study of non-existence of averages for dynamical systems.
In the present paper, we consider the set of points with high pointwise emergence for systems with non-uniform structure and prove that this set carries full topological pressure. For the proof of this result, we show that such systems have ergodic measures of arbitrary intermediate pressures.
}
\end{minipage}
 \end{center}

\vskip0.5cm {\small{\bf Keywords and phrases:} pointwise emergence, topological pressure, non-uniform structure.  }
\vskip0.5cm

\section{Introductions}
Let $(X,\sigma)$ be a dynamical system in the sense that $X$ is a compact metric space and $\sigma:X\to X$ is a continuous map. Denote by $\mathcal{M}(X)$, $\mathcal{M}_\sigma(X)$, $\mathcal{M}_\sigma^e(X)$ the set of Borel probability measures, $\sigma$-invariant Borel  probability  measures, and ergodic $\sigma$-invariant Borel  probability  measures on $X$ with weak* topology respectively.  The statistical behavior of a point $x\in X$ can be reflected by empirical measures. Namely,
\begin{equation*}
\delta_x^n=\frac{1}{n}\sum_{k=0}^{n-1}\delta_{\sigma^kx},
\end{equation*} 
where $\delta_y$ denotes the Dirac measure at $y\in X$. Consider the sequence $\{ \delta_x^n \}$ and let $V(x)$ be the set of its limit points. Then $V(x)\subset\mathcal{M}_\sigma(X)$ is a nonempty closed connected subset.

The argument on asymptotic behavior of empirical measures is very fundamental in ergodic theory. We say a point $x\in X$ has historic behavior if $V(x)$ is not a single point set. The development in the study of historic sets is similar to those in the study of multifractal decomposition sets. Although the historic set is too small to be considered from the viewpoint of ergodic theory according to Birkhoff ergodic theorem, it turns out that the dynamical structure may be quite complex if we study its qualitative aspects of dynamics. Barreira and Schmeling \cite{Barreira2000} showed that for a class of systems, historic set carries full entropy and Hausdorff dimension. In \cite{Chen2003}, the authors proved that historic set has full topological entropy for systems with specification property. \cite{Dong2018} generalized this result to the systems with shadowing property. See \cite{Bomfim2017, Barreira2018, Takens2008, Thomspon2010, Thompson2012, Zhou2013} for more results of historic behavior in the context of thermodynamic formalism.

Recently Berger \cite{Berger2017} introduced a quantitative viewpoint called emergence into the study of infinitude of averages, and further developed it in \cite{Berger2019} with Bochi. Roughly speaking, the notation of emergence is to quantify how far a system is to be ergodic. It is interesting to investigate whether the dynamics of high emergence are typical, that is, the system whose statistical behavior is poorly described by finitely many probability measures. Kiriki, Nakano and Soma \cite{Kiriki2019} introduced a ``local" version of emergence called pointwise emergence as a new quantitative perspective of $V(x)$.
Given $\epsilon>0$ and $x\in X$, the pointwise emergence $\mathcal{E}_x(\epsilon)$ is defined as the capacity of set $V(x)$, i.e.,
\begin{equation*}
	\mathcal{E}_x(\epsilon)=N(V(x),\epsilon),
\end{equation*}
where $N(V(x),\epsilon)$ denotes the $\epsilon$-covering number of $V(x)$. Call $x$ has high pointwise emergence if 
\begin{equation*}
	\lim_{\epsilon\to0}\frac{\log\mathcal{E}_x(\epsilon)}{-\log\epsilon}=\infty.
\end{equation*}
It was proved that there is a residual subset with high pointwise emergence for topological mixing subshift. A generalized notation ``dynamical pointwise emergence" was studied in \cite{Ji2020}.  In \cite{Nakano2020}, the authors proved that the set of points with high pointwise emergence for topological mixing subshifts of finite type has full topological entropy, full Hausdorff dimension, and full topological pressure for any H\"{o}lder continuous potential. 
Recently, Zelerowicz (\cite{Zel2021}) obtains a lower bound of the Hausdorff dimension of the set of points with high pointwise emergence for $C^{1+\alpha}$ diffeomorphisms. 

A natural question is whether the above results are still valid for more general dynamical systems. To study the existence and uniquencess of measures of maximal entropy and equilibrium states under the weaker specification property, Climenhaga etc. \cite{Climenhaga2012} consider shift spaces with the specification property only holds for ``good words" which is large enough in an appropriate sense. After that a series of remarkable works emerged developing a theory of non-uniform specification which found application for intrinsic ergodicity and equilibrium states of various non-uniformly hyperbolic systems. Based on the works in \cite{Climenhaga2012,Climenhaga2013}, Climenhaga, Thompson and Yamamoto \cite{Climenhaga2017} derived level-$2$ large deviations principle for a broad class of symbolic systems, i.e., the subshifts with non-uniform structure. An important class of shifts with non-uniform structure is given by the $\beta$-shifts, which codes the transformations $x\mapsto\beta x$ (mod $1$) for $\beta>1$, $x\in[0,1]$, and has a deep connection with number theory. Inspired by the research of dimension for subshifts in \cite{Chen1997,Gatzouras1997}, we shall discuss topological pressure for subshifts with non-uniform structure to study the dimension of the set of high pointwise emergence.

Now we present main results of this paper. Let $C(X)$ be the Banach space of all continuous real-valued functions on $X$ equipped with the supremum norm $||\cdot||$. For $f\in C(X)$, set
\begin{equation*}
	P_{\rm inf}(\sigma,f)=\inf\left\{ P_\mu(\sigma,f):\mu\in\mathcal{M}_\sigma(X) \right\},
\end{equation*}
where $P_\mu(\sigma,f)=h_\mu(\sigma)+\int f\dif\mu$ and $h_\mu(\sigma)$ denotes the measure theoretic entropy.
Denote by $P_Z(\sigma,f)$, $P_{\rm top}(\sigma,f)$ the Pein-Pitskel pressure of $Z\subset X$, and topological pressure with respect to $f$ respectively.

\begin{thm}\label{key1}
	Let $X$ be a shift space with $\mathcal{L}=\mathcal{L}(X)$. Suppose that $\mathcal{G}\subset\mathcal{L}$ has $(W)$-specification and $\mathcal{L}$ is edit approachable by $\mathcal{G}$. For any $f\in C(X)$ with $P_{\rm inf}(f)<P_{\rm top}(\sigma,f)$, we have
	\begin{equation*}
		P_{H}(\sigma,f)=P_{\rm top}(\sigma,f),
	\end{equation*}
	where $H=\{ x\in X:\lim\limits_{\varepsilon\to0}\frac{\log\mathcal{E}_x(\varepsilon)}{-\log\varepsilon}=\infty \}$.
\end{thm}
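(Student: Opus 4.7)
The plan is to combine two ingredients: first, the intermediate pressure theorem for systems with non-uniform structure (which the abstract announces as the auxiliary result), and second, a Moran-type construction that forces the set of empirical limit points $V(x)$ to be very large for every $x$ in the constructed set, while preserving the pressure. The target inequality $P_H(\sigma,f)\ge P_{\mathrm{top}}(\sigma,f)$ is the only nontrivial direction, since $H\subset X$ gives the reverse bound automatically.

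First I would establish, or invoke, the intermediate pressure statement: under $(W)$-specification on $\mathcal{G}$ and edit-approachability, for every $c\in (P_{\mathrm{inf}}(f),P_{\mathrm{top}}(\sigma,f))$ there is an ergodic measure $\mu_c\in\mathcal{M}_\sigma^e(X)$ with $P_{\mu_c}(\sigma,f)=c$. From this I fix a countable family of ergodic measures $\{\mu_k\}_{k\ge 1}\subset\mathcal{M}_\sigma^e(X)$ whose pressures approach $P_{\mathrm{top}}(\sigma,f)$ and which are affinely independent enough that for each $N$ the convex hull $\mathrm{conv}\{\mu_1,\dots,\mu_N\}$ has weak$^\ast$-diameter bounded below and contains an $\varepsilon$-separated set of size roughly $(1/\varepsilon)^{N-1}$. (Ergodic measures in a non-trivial system are always affinely independent, and by perturbing within the intermediate-pressure family one can guarantee the geometric non-degeneracy needed below.)

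Next I would carry out the Moran-like construction. Fix $\eta>0$ and pick $N_k\uparrow\infty$. Using the Birkhoff theorem I take, for each $k$ and each $j\le N_k$, a generic point for $\mu_j$ and extract from its orbit a long word $w_{k,j}\in\mathcal{G}$ whose Birkhoff sums of $f$ and whose empirical distribution approximate $\int f\,d\mu_j$ and $\mu_j$ to within $1/k$; the fact that $\mathcal{L}$ is edit approachable by $\mathcal{G}$ lets me assume these words lie in the good collection after a controlled edit. Using the $(W)$-specification on $\mathcal{G}$ I concatenate, at stage $k$, all possible long juxtapositions of the words $w_{k,1},\dots,w_{k,N_k}$ (each repeated many times, in every order), gluing them with uniformly bounded connecting words. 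Iterating over $k$ produces a Cantor-like set $F$ all of whose points $x$ have the property that $V(x)$ contains the convex hull $\mathrm{conv}\{\mu_1,\mu_2,\dots\}$; in particular $\mathcal{E}_x(\varepsilon)\ge (1/\varepsilon)^{N-1}$ for every $N$ once $\varepsilon$ is small, so $F\subset H$.

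Finally I would estimate $P_F(\sigma,f)$ from below via a Bowen-type argument on the Moran tree: each level-$k$ branching contributes roughly $\exp(\ell_k(P_{\mu_k}(\sigma,f)-\eta))$ descendants, and choosing the stage lengths $\ell_k$ growing fast enough relative to the specification gap makes the connecting words pressure-negligible. A uniform Gibbs-type counting lemma (the standard consequence of $(W)$-specification on $\mathcal{G}$ combined with edit-approachability, already used by Climenhaga--Thompson--Yamamoto in this setting) then yields a measure equidistributed on the Moran tree which is admissible in the Pein--Pitskel variational setup, giving $P_F(\sigma,f)\ge P_{\mu_k}(\sigma,f)-\eta$ for every $k$, hence $\ge P_{\mathrm{top}}(\sigma,f)-\eta$. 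Letting $\eta\downarrow 0$ closes the argument. The main obstacle I foresee is technical: synchronizing the three requirements in the Moran construction — that $V(x)$ be forced to equal the closed convex hull of all $\mu_k$ (needing the $N_k$-th level words to be visited densely in any $V(x)$), that the edit-approximation and $(W)$-specification connecting words not disturb the empirical distributions or the pressure, and that the branching be large enough to recover the full pressure. Balancing these three rates is where all the bookkeeping will live.
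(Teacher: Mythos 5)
Your overall strategy is the same as the paper's: use the intermediate-pressure theorem to select a sequence of ergodic measures with pressures approaching $P_{\rm top}(\sigma,f)$, build a Moran set whose points all have $V(x)$ containing the infinite-dimensional simplex generated by these measures (hence high pointwise emergence, via the Kiriki--Nakano--Soma covering-number estimate you re-derive), and bound the Pesin--Pitskel pressure from below with an entropy-distribution-principle measure on the Moran tree. However, there are two concrete gaps in the construction as you describe it. First, your source of branching is wrong: you take a \emph{single} generic word $w_{k,j}$ for each $\mu_j$ and branch over "all possible juxtapositions \dots in every order." The number of orderings of $m$ blocks of length $n$ drawn from $N_k$ types is at most $N_k^{m}$, so the exponential growth rate per symbol is at most $\frac{\log N_k}{n}\to 0$; this cannot produce the $\exp\bigl(\ell_k(P_{\mu_k}(\sigma,f)-\eta)\bigr)$ descendants you claim, because it carries none of the entropy $h_{\mu_k}(\sigma)$. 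The paper instead takes, via Birkhoff and Brin--Katok, a set $\Gamma_{L,l}$ of $\mu^{(l)}$-measure close to $1$ of uniformly generic points, obtaining \emph{exponentially many} generic words $\widetilde{\mathcal W}(L,j,l)$ with $\#\ge(1-\hat\epsilon_L)e^{n(h_{\mu^{(l)}}(\sigma)-\epsilon_L)}$, pushes them into the free-concatenation collection $\mathcal F$ by the edit map, and uses the counting lemma (\cite[Lemma 2.6]{Climenhaga2017}) to show only $e^{n\epsilon_L}$ words collapse to a common image, so the cardinality survives. This multiplicity of words per slot, not the ordering of blocks, is what carries the pressure.

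Second, branching over orderings conflicts with the requirement that \emph{every} point of the Cantor set have $V(x)\supset\Delta(\mathcal T)$: each $x$ follows one itinerary, so if the orderings vary across branches, different points realize different limit sets, and moreover a fixed juxtaposition of equal-length blocks only makes $\delta_x^n$ trace one path through the simplex rather than accumulating on every convex combination. The paper resolves this by fixing, for each $L$, a finite $\frac{\epsilon_L}{L+1}$-net $\{\mathbf t_{L,j}\}$ of the simplex $A_L$ and a \emph{deterministic} schedule (block lengths $n_{L,j}(l)$ proportional to $\mathbf t_{L,j}$, repeated $T_{L,j}$ times, with $T_{L,j}$ growing fast enough that earlier stages become negligible in proportion); the only freedom left to a point of $\Lambda$ is the choice of word within each $\mathcal W(L,j,l)$, which affects neither the empirical measures (all such words are $\epsilon_L$-generic for $\mu^{(l)}$) nor the targeted convex combination. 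You would need to restructure your construction along these lines; the rate-balancing you flag at the end is real, but it is secondary to these two structural corrections.
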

\begin{rem}
	Similar proof can obtain that the set of points with high pointwise emergence carries full topological entropy. That is $h_{\rm top}(\sigma,H)=h_{\rm top}(\sigma,X)$.
\end{rem}

To prove above theorem, we need to consider the ergodic measures of intermediate pressure for systems with non-uniform structure. This question originated from following Katok's conjecture.

\begin{quote}
	{\noindent\bf Katok's Conjecture.} For any $C^2$ diffeomorphism $T$ on a Riemannian manifold $X$, one has
	\begin{equation*}
	\left\{  
	h_{\mu}(T): \mu\ \text{is an ergodic measure for}\ (X,T)
	\right\}\supseteq [0,h_{\rm top}(T)).
	\end{equation*}
\end{quote}
The relative work and progress on this conjecture see \cite{Guan2017, Konieczny2018, Li2018, Li2020, Quas2016, Sun2010, Sun20102, Sun2012, Sun2019, sun2020, Sun2020, Ures2012, Yang2020}. In \cite{Sun2020}, Sun showed that it has dense intermediate pressures and dense intermediate entropies of ergodic measures for systems with Climenhaga-Thompson structures which defined in  \cite{Climenhaga2016}. That is for such system $(X,\sigma)$ and a continuous real valued potential $\phi$, the set $\{ P_\mu(\sigma, \phi):\mu\in\mathcal{M}_f^e(X) \}$ is dense in the interval $[P^*(\phi), P_{\rm top}(\sigma, \phi)]$, where
\begin{equation*}
	P^*(\phi):=\liminf_{n\to\infty}\sup_{x\in X}\left\{
	\frac{1}{n}\sum_{k=0}^{n-1}\phi(\sigma^k(x) ) \right\}.
\end{equation*} 
There is a gap between Climenhaga-Thompson structures and non-uniform structure although they are all defined by the way of gluing orbit properties. We emphasize here that Sun \cite{Sun2019} also proved that the systems with approximate product property and asymptotic entropy expansiveness have ergodic measures of arbitrary intermediate pressures. We cannot directly use this result since there is a type of subshifts with non-uniform structure that does not satisfy approximate product property (see Section 6).

\begin{thm}\label{key2}
	Let $X$ be a shift space with $\mathcal{L}=\mathcal{L}(X)$. Suppose that $\mathcal{G}\subset\mathcal{L}$ has $(W)$-specification and $\mathcal{L}$ is edit approachable by $\mathcal{G}$. 
	Then for $f\in C(X)$ with $P_{\rm inf}(\sigma,f)<P_{\rm top}(\sigma,f)$ and any $\alpha\in(P_{\rm inf}(\sigma,f),P_{\rm top}(\sigma,f)]$, there is $\nu_\alpha\in \mathcal{M}_\sigma^e(X)$ such that $P_{\nu_\alpha}(\sigma,f)=\alpha$.
\end{thm}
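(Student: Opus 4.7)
The plan is to realize every value $\alpha \in (P_{\inf}(\sigma,f), P_{\rm top}(\sigma,f)]$ as the pressure of an ergodic measure through a gluing construction that exploits $(W)$-specification and edit approachability. First I would produce two ergodic ``bracketing'' measures whose pressures straddle $\alpha$. On the upper side, the hypothesis $P_{\inf}(\sigma,f) < P_{\rm top}(\sigma,f)$ combined with the existence of an equilibrium state for non-uniformly specified systems (and the fact that an ergodic component of an equilibrium state is itself an equilibrium state, by affineness of the pressure functional) supplies an ergodic $\mu^+$ with $P_{\mu^+}(\sigma,f) = P_{\rm top}(\sigma,f)$; if $\alpha = P_{\rm top}(\sigma,f)$ the theorem is immediate. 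On the lower side, entropy-density of ergodic measures --- a Pfister--Sullivan-type property available under $(W)$-specification and edit approachability --- yields an ergodic $\mu^-$ with $P_{\mu^-}(\sigma,f)$ arbitrarily close to $P_{\inf}(\sigma,f)$, and in particular strictly below $\alpha$. Since $\mu \mapsto h_\mu(\sigma) + \int f\dif\mu$ is affine on $\mathcal{M}_\sigma(X)$, there is a unique $t \in (0,1)$ with $t P_{\mu^-}(\sigma,f) + (1-t) P_{\mu^+}(\sigma,f) = \alpha$.

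Next I would build an ergodic measure $\nu_\alpha$ with $P_{\nu_\alpha}(\sigma,f) = \alpha$ exactly. For block lengths $n_k \to \infty$ and tolerances $\varepsilon_k \to 0$, I would choose maximal $(n_k,\varepsilon_k)$-separated subsets $E_k^\pm$ of words that are simultaneously generic for $\mu^\pm$; their cardinalities grow like $\exp(n_k h_{\mu^\pm}(\sigma))$ by Katok's entropy formula. Concatenate choices from $E_k^-$ and $E_k^+$ so that the cumulative time spent in ``$-$''-blocks and ``$+$''-blocks is asymptotically in ratio $t:(1-t)$; $(W)$-specification supplies the required gap words in $\mathcal{G}$, and edit approachability promotes the resulting concatenations to genuine words of $\mathcal{L}$ with only a sublinear number of edits. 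Any weak-$*$ accumulation point of the empirical distributions along such an orbit inherits
$$\int f\dif\nu_\alpha = t\int f\dif\mu^- + (1-t)\int f\dif\mu^+,$$
while counting the distinct glued orbits --- together with upper semicontinuity of entropy on the shift --- yields
$$h_{\nu_\alpha}(\sigma) = t\, h_{\mu^-}(\sigma) + (1-t)\, h_{\mu^+}(\sigma),$$
so that $P_{\nu_\alpha}(\sigma,f) = \alpha$ as desired.

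The main obstacle is ensuring that this construction produces an \emph{ergodic} measure while simultaneously realizing the target entropy \emph{exactly}. The block lengths $n_k$ must grow rapidly enough that the $(W)$-specification gap words and the edit-approachability corrections contribute a vanishing fraction to both the Birkhoff averages and the entropy count, yet the construction must impose enough rigidity that the weak-$*$ limit along the orbit is unique and equal to $t\mu^- + (1-t)\mu^+$. This is essentially a saturated-set argument in the spirit of Fan--Feng--Wu and Pfister--Sullivan, adapted to the non-uniform specification setting, and the delicate technical point is to track how the edits needed to move from $\mathcal{G}$-admissible concatenations to $\mathcal{L}$-admissible words interact with the separated-set counts without depressing the entropy lower bound, so that $P_{\nu_\alpha}(\sigma,f)$ lands on $\alpha$ precisely rather than merely nearby.
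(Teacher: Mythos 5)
There is a genuine gap at the final step of your construction, precisely where the exact value $\alpha$ is supposed to be realized by an \emph{ergodic} measure. If you arrange the glued orbits so that the empirical measures converge to $t\mu^-+(1-t)\mu^+$, then any weak-$*$ accumulation point of those empirical measures \emph{is} $t\mu^-+(1-t)\mu^+$, which is a nontrivial convex combination of two distinct ergodic measures and hence not ergodic. If instead you pass to the closed $\sigma$-invariant set $\Lambda$ generated by all glued orbits and invoke the variational principle to extract an ergodic measure on $\Lambda$, you only control its entropy between the separated-set lower bound and the word-count upper bound, i.e.\ up to an error $\epsilon$ coming from the gap words, the edit corrections, and the slack in Katok's formula; nothing in the construction forces the entropy of that ergodic measure to equal $t\,h_{\mu^-}+(1-t)\,h_{\mu^+}$ exactly. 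Your appeal to ``upper semicontinuity of entropy'' bounds the entropy of a limit of measures from above but cannot pin down the entropy of a single ergodic measure on $\Lambda$. So the argument as written produces ergodic measures with pressure arbitrarily close to $\alpha$, not equal to $\alpha$, and an additional mechanism is needed to remove the $\epsilon$.

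This is exactly the point where the paper's route differs from yours. Its gluing lemma (Lemma 3.1) is deliberately only approximate: it produces an ergodic $\mu$ with $|h_\mu(\sigma)-h|<\epsilon$ and $W(\mu,\nu)<\xi$, via a horseshoe $Y=(\Gamma_M)^{\mathbb N}$ built from $\phi$-images of generic words (handling the edit/multiplicity bookkeeping with the counting lemma you flagged as delicate). The exact realization is then obtained by a Baire category argument rather than by a single construction: the set of ergodic measures with $P_\mu(\sigma,f)=\alpha$ is written as $\bigcap_{k\ge1}\mathcal{P}(\alpha,\alpha+\frac1k)$, each $\mathcal{P}(\alpha,\alpha')$ is shown to be a dense $G_\delta$ in the nonempty compact set $\mathcal{P}^\alpha$ (using upper semicontinuity of $\mu\mapsto P_\mu(\sigma,f)$, the $G_\delta$ property of $\mathcal{M}^e_\sigma(X)$, and a five-case perturbation argument), and residuality yields a point of the intersection. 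If you want to keep your constructive approach, you would need either such a category argument on top of it, or a single nested Moran-type construction with infinitely many correction stages; the one-step gluing you describe cannot land on $\alpha$ exactly.
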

\section{Preliminaries}
In this preliminary section, we provide a number of necessary terminologies and recall some lemmas that will be used in the proof of our main result. 
\subsection{Pointwise emergence and infinite dimension simplex}
Let $(X,d)$ be a compact metric space, it is well known that $\mathcal{M}(X)$ is also compact with the weak*-topology. We mainly consider the Wasserstein metric which is compatible with weak*-topology. Given $\mu, \nu\in\mathcal{M}(X)$, define
\begin{equation*}
	W(\mu,\nu)=\sup_{\varphi\in{\rm Lip}^1(X,[-1,1]) }\bigg|\int_X\varphi(x)\dif\mu(x)-\int_X\varphi(x)\dif\nu(x)\bigg|,
\end{equation*}
where ${\rm Lip}^1(X,[-1,1])$ denotes the space of functions on $X$ with values in $[-1,1]$ such that the Lipschitz constant is bounded by $1$. Then the map $x\mapsto\delta_x$ becomes an isometric embedding of $X$ into $\mathcal{M}(X)$.

\begin{defn}{\rm \cite[pointwise emergence]{Kiriki2019}}
	Given $\varepsilon>0$ and $x\in X$, the pointwise emergence $\mathcal{E}_x(\varepsilon)=\mathcal{E}_x(\varepsilon,\sigma)$ of $\sigma$ at scale $\varepsilon$ at $x$ is defined by
	\begin{align*}
		\mathcal{E}_x(\varepsilon)&=\min\bigg\{ N\in\mathbb{N}:\ \text{there eixsts}\ \{ \mu_j\}_{j=1}^N\subset\mathcal{P}(X)\  \text{such that}\\ 
		&\hspace{6cm}\limsup_{n\to\infty}\min_{1\leq j\leq N}W(\delta_x^n,\mu_j)\leq\varepsilon \bigg\}\\
		&=N(V(x),\varepsilon),
	\end{align*}
	where $N(V(x),\varepsilon)$ is the $\varepsilon$-covering number of set $V(x)$ with metric $W$.
\end{defn}

For $L\geq 1$, denote by $A_L$ the set of all $L$-dimensional probability vectors, that is
\begin{equation*}
	A_L=\left\{ {\bf t}=(t_0,t_1,\ldots,t_L)\in[0,1]^{L+1}:\sum_{i=0}^{L}t_i=1 \right\}.
\end{equation*}
For ${\bf n}=(n_0,n_1,\ldots,n_L)\in\mathbb{Z}_+^{L+1}$, define
\begin{equation*}
	\overline{t}({\bf n})=\left( \frac{n_0}{\sum_{i=0}^Ln_i},
	\frac{n_1}{\sum_{i=0}^Ln_i},
	\ldots,
	\frac{n_L}{\sum_{i=0}^Ln_i} \right)\in A_L.
\end{equation*}

Let $\mathcal{T}=\{ \mu^{(l)} \}_{l\geq0}$ be a sequence of Borel probability measures on $X$  and ${\bf t}\in A_L$, set
\begin{equation*}
	\mu_{\bf t}=\mu_{\bf t}(\mathcal{T})=\sum_{i=0}^{L}t_i\mu^{(i)}\in\mathcal{M }(X).
\end{equation*}
Finally, write
\begin{equation*}
	\Delta_L(\mathcal{T})=\left\{ \mu_{\bf t}: {\bf t}\in A_L \right\},\ \ \Delta(\mathcal{T})=\bigcup_{L\geq 1}\Delta_L,
\end{equation*}
and
\begin{equation*}
	E(\mathcal{T})=\left\{ x\in X:\Delta(\mathcal{T}) \subset V(x) \right\}.
\end{equation*}
It was proved in \cite{Kiriki2019} that for any $x\in E(\mathcal{T})$,
	\begin{equation*}
		\lim_{\varepsilon\to0}\frac{\log\mathcal{E}_x(\varepsilon)}{-\log\varepsilon}=\infty.
	\end{equation*}

\subsection{Systems with non-uniform structure}
In this subsection, we recall the systems with non-uniform structure, a class of subshifts which was introduced in \cite{Climenhaga2017} by Climenhaga, Thompson and Yamamoto.

Let $m\geq1$ be an integer and $\mathcal{A}=\{ 1,2,\ldots,m \}$. Consider $\mathcal{A}^\mathbb{N}=\{ (w_i)_{i\geq1}:w_i\in\mathcal{A} \}$ with metric
\begin{equation*}
	d(x,y)=\sum_{j=1}^\infty\frac{|x_j-y_j|}{2^j}.
\end{equation*}  
Let $\sigma$ be the left shift on $\mathcal{A}^\mathbb{N}$ and call $(\mathcal{A}^\mathbb{N},\sigma)$ a full shift. For a nonempty subset $X\subset\mathcal{A}^\mathbb{N}$, we say $(X,\sigma)$ a \textit{subshift} if $X$ is compact and $\sigma(X)\subset X$. 
Denote by $\mathcal{L}_n(X)$ be the set of all $n$-length words that appear in some $x\in X$. Then let $\mathcal{L}=\mathcal{L}(X)=\cup_{n\geq1} \mathcal{L}_n(X)$ and call it the \textit{language} of $X$.
For $u\in\mathcal{L}$, let $|u|$ denote the length of $u$ and $[u]$ denote the cylinder set, that is the set of $x\in X$ beginning with the word $u$. For $x\in X$ and $n\geq 1$, write $[x]_n=[x_1,\ldots,x_n]$.

Let $X$ be a shift space with language $\mathcal{L}$. We say a subset $\mathcal{G}\subset\mathcal{L}$ has \textit{$(W)$-specification with gap length $\tau\in\mathbb{N}$} if for every $u,v\in\mathcal{G}$, there exists $w\in\mathcal{L}$ such that $uwv\in\mathcal{G}$ and $|w|\leq\tau$.

The \textit{edit} of a word $w=w_1\cdots w_n$ is defined by one of the following three actions,\\
(1) substitution: $w\mapsto w'=w_1\cdots w_{i-1}aw_{i+1}\cdots w_n$;\\
(2) insertion: $w\mapsto w'=w_1\cdots w_ia'w_{i+1}\cdots w_n$;\\
(3) deletion: $w\mapsto w'=w_1\cdots w_{i-1}w_{i+1}\cdots w_n$.\\
Here $1\le i\le n$ and $a,a'\in \mathcal{A}$ are arbitrary symbols. Then given $v,w\in\mathcal{L}$, define the \textit{edit distance} between $v$ and $w$ to be the minimum number of edits required to transform the word $v$ into the word $w$, which denoted by $\hat{d}(v,w)$.
The edit distance does not have much effect on statistical behavior, see \cite[Lemma 2.8]{Climenhaga2017} and \cite[Lemma 2.7]{Zhao2018}.

Call a non-decreasing function $g:\mathbb{N}\to\mathbb{N}$ a \textit{mistake function} if $\lim_{n\to\infty}\frac{g(n)}{n}=0$. For $\mathcal{G}\subset\mathcal{L}$, if there is a mistake function $g$ such that for every $w\in\mathcal{L}$, there exists $v\in\mathcal{G}$ with $\hat{d}(w,v)\leq g(|w|)$, we say that \textit{$\mathcal{L}$ is edit approachable by $\mathcal{G}$}.

\subsection{Topological pressure}
Let us recall here the definitions of topological pressure and Pesin-Pitskel topological pressure on the shift space $(X,\sigma)$. 

For $n\in\mathbb{N}$, let $d_n(x,y)=\max\{ d(\sigma^i(x),\sigma^i(y)): 0\leq i\leq n-1 \}$ for any $x, y\in X$. A subset $E\subset X$ is said to be an \textit{$(n,\varepsilon)$ separated set} if for any distinct $x, y\in E$, $d_n(x,y)>\varepsilon$. 
Write $S_nf(x)=\sum_{i=0}^{n-1}f(\sigma^ix)$ for $f\in C(X)$, $n\in\mathbb{N}$ and $x\in X$.

The \textit{topological pressure} of $(X,\sigma)$ with respect to $f\in C(X)$ is given by
\begin{equation*}
	P_{\rm top}(\sigma,f)=\lim_{\varepsilon\to0}\limsup_{n\to\infty}\frac{1}{n}\log P_n(\sigma,f,\varepsilon),
\end{equation*}
where 
\begin{equation*}
	P_n(\sigma,f,\varepsilon)=\sup\left\{ \sum_{x\in E}e^{S_nf(x)}: E\ \text{is an}\ (n,\varepsilon)\ \text{separated set} \right\}.
\end{equation*}
When $f\equiv0$, we obtain \textit{topological entropy} $h_{\rm top}(\sigma,X)$.

Let $Z\subset X$, $f\in C(X)$, $s\geq0$ and $N\in\mathbb{N}$, define
\begin{equation*}
	M(Z,s,N)=\inf_\alpha
	\left\{  \sum_{[u]\in\alpha}\exp\left( -t|u|+\sup_{x\in[u]}S_{|u|}f(x) \right) \right\},
\end{equation*}
where the infimum is taken over all finite or countable collections $\alpha$ of cylinders with length greater than $N$ and covers $Z$ in the sense that $Z\subset\cup_{[u]\in\alpha}[u]$. Clearly, $M(Z,s,N)$ does not decrease as $N$ increases. Define
\begin{equation*}
	M(Z,s)=\lim_{N\to\infty}M(Z,s,N).
\end{equation*}
The \textit{Pesin-Pitskel topological pressure} of $Z$ is the value where $M(Z,s)$ jumps from $\infty$ to $0$, i.e.,
\begin{equation*}
	P_{Z}(\sigma,f)=\inf\left\{ s\geq 0: M(Z,s)=0 \right\}.
\end{equation*}

For $\mu\in\mathcal{M}(X)$, $f\in C(X)$ and $x\in X$, set
\begin{equation*}
	P_\mu(\sigma,f,x)=\liminf_{n\to\infty}\frac{1}{n}\log\left[ e^{S_nf(x)} \mu([x]_n)^{-1} \right].
\end{equation*}
For $\mu\in\mathcal{M}_\sigma^e(X)$, it follows from Birkhoff's ergodic theorem and Brin-Katok's entropy formula \cite{Brin1983} that
\begin{equation*}
	P_{\mu}(\sigma,f,x)=h_{\mu}(\sigma)+\int f\dif\mu
\end{equation*}
holds for $\mu$-almost every $x\in X$. Write $P_\mu(\sigma,f)=h_\mu(\sigma)+\int f\dif\mu$.
The following result shows that the Pesin-Pitskel topological pressure can be determined by measure theoretic pressure.
\begin{thm}{\rm\cite{Tang2015}}\label{bt}
	Let $f$ be a continuous function on $X$,
	$\mu$ be a Borel probability measure on $X$ and $Z\subset X$ be a Borel subset. For $s\in\mathbb{R}$, if $\mu(Z)>0$ and $P_\mu(\sigma,f,x)\geq s$ for all $x\in Z$, then
    $P_{Z}(\sigma,f)\geq s$.
\end{thm}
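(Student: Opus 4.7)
The plan is to adapt the standard Billingsley-type mass distribution argument to the Pesin-Pitskel pressure setup. The goal is to bound $M(Z,s-\varepsilon,N)$ from below by $\mu(Z')$ for some subset $Z'\subset Z$ of positive $\mu$-measure, which will force $P_Z(\sigma,f)\geq s-\varepsilon$; letting $\varepsilon\to 0$ gives the conclusion. The contrapositive viewpoint is: if $P_Z(\sigma,f)<s$, then covers of $Z$ by cylinders have arbitrarily small Carathéodory sums at level $s-\varepsilon$ for some $\varepsilon>0$, and I want to convert these sums into an upper bound on $\mu(Z)$ using the pointwise pressure hypothesis.

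The main technical step is to localize the $\liminf$ defining $P_\mu(\sigma,f,x)$. Fix $\varepsilon>0$ and define
\begin{equation*}
Z_N=\left\{x\in Z:\mu([x]_n)\leq e^{-n(s-\varepsilon)+S_nf(x)}\ \text{for all}\ n\geq N\right\}.
\end{equation*}
Since $P_\mu(\sigma,f,x)\geq s$ for every $x\in Z$, one has $Z=\bigcup_{N\geq 1}Z_N$, and the sets $Z_N$ are nested upwards, so by continuity of measure there exists $N_0$ with $\mu(Z_{N_0})>0$.

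Now let $N\geq N_0$ be arbitrary and let $\alpha=\{[u_i]\}$ be any countable cover of $Z_{N_0}$ by cylinders with $|u_i|>N$. Discarding empty intersections, pick $x_i\in[u_i]\cap Z_{N_0}$; then $[u_i]=[x_i]_{|u_i|}$ and the defining inequality for $Z_{N_0}$ gives
\begin{equation*}
\mu([u_i])\leq e^{-|u_i|(s-\varepsilon)+S_{|u_i|}f(x_i)}\leq \exp\left(-|u_i|(s-\varepsilon)+\sup_{y\in[u_i]}S_{|u_i|}f(y)\right).
\end{equation*}
Summing over $i$ and using $\mu(Z_{N_0})\leq\sum_i\mu([u_i])$ yields
\begin{equation*}
\sum_{[u_i]\in\alpha}\exp\left(-|u_i|(s-\varepsilon)+\sup_{y\in[u_i]}S_{|u_i|}f(y)\right)\geq\mu(Z_{N_0}).
\end{equation*}
Taking the infimum over all such covers and then $N\to\infty$ gives $M(Z_{N_0},s-\varepsilon)\geq\mu(Z_{N_0})>0$, so that $P_{Z_{N_0}}(\sigma,f)\geq s-\varepsilon$. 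Since $Z_{N_0}\subset Z$, monotonicity of the Pesin-Pitskel pressure under inclusion of sets (immediate from the definition, as covers of $Z$ also cover $Z_{N_0}$) gives $P_Z(\sigma,f)\geq s-\varepsilon$, and $\varepsilon$ was arbitrary.

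The only subtle point is the $\liminf$ in the definition of $P_\mu(\sigma,f,x)$, which forces the stratification $Z=\bigcup_N Z_N$ rather than a direct estimate; this is the standard workaround and is the only step that requires care. Everything else reduces to the observation that pressure-weighted cylinder covers compare to $\mu$-mass via the pointwise bound, exactly parallel to the Billingsley lemma for Hausdorff dimension.
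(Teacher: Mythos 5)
Your argument is correct and is exactly the standard Billingsley/mass-distribution argument: the paper does not prove this statement but cites it from \cite{Tang2015}, where the same stratification $Z=\bigcup_N Z_N$ (to handle the $\liminf$) followed by the cylinder-cover comparison with $\mu$-mass is used. No gaps: the sets $Z_N$ are Borel, the cover estimate $M(Z_{N_0},s-\varepsilon)\geq\mu(Z_{N_0})>0$ is valid, and monotonicity of $P_{(\cdot)}(\sigma,f)$ under inclusion finishes the proof.
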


\section{Intermediate entropies and pressures}

In this section we prove Theorem \ref{key2}. The approach arises from the work in \cite{Sun2019}. For reader's convenience we provide a proof here in our context.

First, we prove the denseness of intermediate entropies, that is,
\begin{thm}\label{t:entropydense}
	Let $X$ be a shift space with $\mathcal{L}=\mathcal{L}(X)$. Suppose that $\mathcal{G}\subset\mathcal{L}$ has $(W)$-specification and $\mathcal{L}$ is edit approachable by $\mathcal{G}$. Then for any $\alpha\in[0,h_{\rm top}(\sigma,X))$, 
	$$\mathcal{M}_e(X,\sigma,\alpha):=\{\mu\in\mathcal{M}^e_\sigma(X):h_{\mu}(\sigma)=\alpha\}$$
	is a residual subset in the compact metric space 
	$$\mathcal{M}^\alpha(X,\sigma):=\{\mu\in\mathcal{M}_\sigma(X):h_{\mu}(\sigma)\ge\alpha\}.$$
\end{thm}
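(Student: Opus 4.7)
The plan is to apply the Baire category theorem inside the compact convex metrizable space $\mathcal{M}^\alpha(X,\sigma)$ by showing that $\mathcal{M}_e(X,\sigma,\alpha)$ is a dense $G_\delta$ subset. Because a shift is expansive, the entropy map $\mu\mapsto h_\mu(\sigma)$ is upper semicontinuous on $\mathcal{M}_\sigma(X)$, so $\mathcal{M}^\alpha(X,\sigma)$ is a closed subset of the compact space $\mathcal{M}_\sigma(X)$, hence compact, and it is convex by the affinity of entropy. Within $\mathcal{M}^\alpha(X,\sigma)$ one has $\{h_\mu=\alpha\}=\bigcap_{n\ge 1}\{h_\mu<\alpha+1/n\}$, so this level set is $G_\delta$; combined with the classical fact that $\mathcal{M}_\sigma^e(X)$ is $G_\delta$ in $\mathcal{M}_\sigma(X)$, it follows that $\mathcal{M}_e(X,\sigma,\alpha)$ is $G_\delta$ in $\mathcal{M}^\alpha(X,\sigma)$. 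Only density remains to be proved.

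For density, fix $\mu\in\mathcal{M}^\alpha(X,\sigma)$ and an open neighborhood $U$ of $\mu$. I first reduce to the case $h_\mu>\alpha$: if $h_\mu=\alpha$, then since $\alpha<h_{\rm top}(\sigma,X)$ I pick any $\mu^\star\in\mathcal{M}_\sigma(X)$ with $h_{\mu^\star}>\alpha$ and consider the convex combination $(1-\lambda)\mu+\lambda\mu^\star$; by affinity its entropy is $(1-\lambda)\alpha+\lambda h_{\mu^\star}>\alpha$, and for small $\lambda>0$ it lies in $U\cap\mathcal{M}^\alpha(X,\sigma)$. Hence it suffices to produce, for each $\mu$ with $h_\mu>\alpha$ and each open $U\ni\mu$, an ergodic $\nu\in U$ with $h_\nu=\alpha$.

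The construction of $\nu$ is where the non-uniform structure enters and is the hardest step. Fix a periodic ergodic measure $\mu_0$ of zero entropy, obtained by iterating a short word in $\mathcal{G}$, and by a generic-point approximation (using edit-approachability together with Brin--Katok's formula) pick an ergodic $\mu_1$ close to $\mu$ with $h_{\mu_1}$ arbitrarily close to $h_\mu$. For $\beta\in[0,1]$ I would build a candidate $\nu_\beta$ by concatenating, via the $(W)$-specification and its bounded-length gap words, long $\mu_1$-generic $\mathcal{G}$-words with shorter periodic blocks in time proportions $\beta:(1-\beta)$, so that the empirical measures converge to $\beta\mu_1+(1-\beta)\mu_0$ (up to asymptotically negligible errors controlled by the mistake function $g$ and the gap length $\tau$) and the entropy equals $\beta h_{\mu_1}$. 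Tuning $\beta$, and if necessary iterating the approximation by taking $\nu_\beta$ itself as a new base point, yields an ergodic measure with $h_\nu=\alpha$ arbitrarily close to $\mu$.

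The principal obstacle is making this parametric construction rigorous. Computing $h_{\nu_\beta}$ exactly rather than merely bounding it requires the edit-distance machinery of Climenhaga--Thompson--Yamamoto, and one must simultaneously control the weak* distance from $\nu_\beta$ to $\mu$, which forces a careful demonstration that the spurious ``gap'' and ``edit'' contributions in the concatenation have vanishing asymptotic density. Once this construction is in place, Baire category in $\mathcal{M}^\alpha(X,\sigma)$ delivers the residuality claim immediately.
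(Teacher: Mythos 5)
Your Baire-category skeleton is exactly the paper's: upper semicontinuity of entropy makes $\mathcal{M}^\alpha(X,\sigma)$ compact, $\mathcal{M}^e_\sigma(X)$ is $G_\delta$, and $\{h_\mu=\alpha\}=\bigcap_n\{h_\mu<\alpha+1/n\}$ gives the $G_\delta$ structure; the initial perturbation toward a higher-entropy measure is also the paper's first step. The gap is in the density construction, and it is a real one, in two respects. First, your entropy-lowering mechanism --- diluting $\mu_1$-generic words with periodic blocks in time proportion $\beta:(1-\beta)$ --- ties the entropy drop to a weak* displacement: the resulting measures accumulate on $\beta\mu_1+(1-\beta)\mu_0$, and to reach entropy $\alpha$ you must take $\beta\approx\alpha/h_{\mu_1}$. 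When $h_\mu$ (hence $h_{\mu_1}$) is far above $\alpha$ (e.g.\ $\alpha=0$ and $\mu$ of maximal entropy), $\beta$ is bounded away from $1$ and your candidate measure is pulled toward the periodic measure $\mu_0$, so it cannot be made close to $\mu$. Density in $\mathcal{M}^\alpha$ requires approximating \emph{every} $\mu$ with $h_\mu\ge\alpha$, so this construction fails precisely in the regime that matters. The paper avoids this by a different mechanism (its Lemma 3.1): it takes the collection $\mathcal{L}_N^{\nu,\eta}$ of $\nu$-generic words of cardinality at least $e^{Nh}$, pushes it into the free-concatenation family $\mathcal{F}$ with controlled multiplicity via the edit map, and then simply \emph{discards words} until the surviving collection $\Gamma_M$ has cardinality between $e^{M(h-\epsilon/2)}$ and $e^{M(h+\epsilon/2)}$; the subshift $(\Gamma_M)^{\mathbb{N}}$ then has topological entropy within $\epsilon$ of $h$ while every orbit remains empirically close to $\nu$, so entropy is reduced without moving in the weak* topology.

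Second, you aim to produce an ergodic $\nu$ with $h_\nu=\alpha$ \emph{exactly} in a single construction, and you correctly identify this as your ``principal obstacle''; but it is also unnecessary. The paper only ever proves the approximate statement $|h_\mu-h|<\epsilon$, shows that $\mathcal{M}_e(\alpha,\alpha'):=\{\mu\in\mathcal{M}^e_\sigma(X):\alpha\le h_\mu<\alpha'\}$ is dense and $G_\delta$ (hence residual) in $\mathcal{M}^\alpha(X,\sigma)$ for each $\alpha'>\alpha$, and obtains the exact level set as the countable intersection $\bigcap_n\mathcal{M}_e(\alpha,\alpha+\frac1n)$ of residual sets. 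You should restructure your density step to this two-sided approximate form; trying to tune $\beta$ to hit $\alpha$ exactly would additionally require continuity of $\beta\mapsto h_{\nu_\beta}$ for a family of measures that is not canonically defined, which you have no means to establish.
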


Suppose that $(X,\sigma)$ is a shift space. 
For $\nu\in\mathcal{M}(X)$, $n\in\mathbb{N}$ and $\eta>0$, let 
$$\mathcal{L}_n^{\nu,\eta}=\{\omega\in\mathcal{L}_n:W(\delta_x^n,\nu)<\eta\text{ for any }x\in[\omega]\}.$$



\begin{lem}\label{l:ed}
	Let $X$ be a shift space with $\mathcal{L}=\mathcal{L}(X)$. Suppose that $\mathcal{G}\subset\mathcal{L}$ has $(W)$-specification and $\mathcal{L}$ is edit approachable by $\mathcal{G}$. Then for any $\nu\in\mathcal{M}^e_{\sigma}(X)$, $h\in(0,h_{\nu}(\sigma))$ and $\epsilon,\xi>0$, there exists $\mu\in\mathcal{M}^e_{\sigma}(X)$ with $W(\mu,\nu)<\xi$ such that 
	$$|h_{\mu}(\sigma)-h|<\epsilon.$$
\end{lem}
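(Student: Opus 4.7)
The plan is to realize $\mu$ as an ergodic measure supported on an auxiliary subshift $Y_n \subset X$ built by gluing together ``typical'' words for $\nu$, in the spirit of the horseshoe construction for systems with non-uniform structure (following the approach of Sun \cite{Sun2019} adapted to the edit-approachability setting).

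First, I would exploit ergodicity of $\nu$: by Birkhoff's ergodic theorem and Shannon--McMillan--Breiman, for any prescribed $\eta_0 > 0$ and any $\epsilon' \in (0, h_{\nu}(\sigma) - h)$, all sufficiently large $n$ admit a collection $\mathcal{F}_n \subset \mathcal{L}_n^{\nu,\eta_0}$ with $|\mathcal{F}_n| \ge e^{n(h_{\nu}(\sigma) - \epsilon')}$, from which I extract $\mathcal{D}_n \subset \mathcal{F}_n$ of cardinality in $[e^{nh}, 2e^{nh}]$. For each $\omega \in \mathcal{D}_n$ I would pick $v_\omega \in \mathcal{G}$ with $\hat{d}(\omega, v_\omega) \le g(n)$, so $|v_\omega| \in [n - g(n), n + g(n)]$; the edit-distance estimate (Climenhaga--Thompson--Yamamoto, Lemma~2.8) then implies that cylinders on $v_\omega$ still produce empirical measures $O(\eta_0 + g(n)/n)$-close to $\nu$. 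After a subexponential loss to make the map $\omega \mapsto v_\omega$ injective and to restrict to a common length $N_n$ with $N_n/n \to 1$, I obtain $\mathcal{D}'_n \subset \mathcal{G} \cap \mathcal{L}_{N_n}^{\nu, 2\eta_0}$ with $|\mathcal{D}'_n| \ge e^{n(h - o(1))}$.

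Using $(W)$-specification with gap $\tau$, any finite string $v_1, \ldots, v_k \in \mathcal{D}'_n$ can be interpolated by bridges $w_i \in \mathcal{L}$ of length at most $\tau$ so that $v_1 w_1 v_2 \cdots v_k \in \mathcal{G}$. Fixing a choice of bridge $w_{v,v'}$ for each ordered pair, one obtains a coding map from the full Bernoulli shift $\Sigma_n := (\mathcal{D}'_n)^{\mathbb{N}}$ into $X$ whose image generates the closed shift-invariant hull $Y_n \subset X$. Pushing forward the uniform Bernoulli measure on $\Sigma_n$ and averaging over the $N_n + \tau$ shift positions produces an ergodic measure $\mu_n \in \mathcal{M}_{\sigma}^e(X)$ supported on $Y_n$, with $h_{\mu_n}(\sigma) = (\log |\mathcal{D}'_n|) / (N_n + O(\tau))$, which tends to $h$ as $n \to \infty$. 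Since $\mu_n$-almost every point has long-term empirical average that is a mixture of empirical measures on $\mathcal{D}'_n$-blocks (each $2\eta_0$-close to $\nu$) plus a vanishing bridge contribution, $W(\mu_n, \nu) = O(\eta_0 + \tau/N_n)$. Choosing $\eta_0$ small relative to $\xi$ and $n$ large relative to $\epsilon$ yields the required $\mu = \mu_n$.

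The main obstacle is the balancing act in the second step: we need the losses from imposing injectivity and a common length to stay subexponential so that the entropy remains close to $h$, while simultaneously keeping the $O(g(n)/n)$ error in the empirical-measure estimate small enough that $\mu_n$ stays within $\xi$ of $\nu$. The edit-ball size is bounded by $(Cn)^{g(n)} = e^{g(n)\log n}$, which is $e^{o(n)}$ whenever $g(n)\log n = o(n)$; this is slightly stronger than the bare hypothesis $g(n)/n \to 0$, but can always be arranged by greedy thinning of $\mathcal{D}_n$ or by enlarging $g$ within the edit-approachability definition. The ergodicity and entropy computation for the averaged Bernoulli measure are then standard consequences of the product structure on $\Sigma_n$ and the Birkhoff ergodic theorem.
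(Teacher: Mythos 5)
Your overall strategy (extract exponentially many $\nu$-typical words, push them into good words at subexponential cost, build a product-type subshift, and take an ergodic measure on it) is the right one and matches the paper's up to the gluing step. But the gluing step as you describe it has a genuine gap. The $(W)$-specification hypothesis only says: for $u,v\in\mathcal{G}$ there is \emph{some} $w$ with $|w|\le\tau$ and $uwv\in\mathcal{G}$. To concatenate $v_1,v_2,\dots,v_k$ you must apply this iteratively with $u$ equal to the entire accumulated word $v_1w_1\cdots w_{k-1}v_k$, so the bridge $w_k$ depends on the whole prefix, not just on the ordered pair $(v_k,v_{k+1})$; there is no guarantee that a bridge chosen once per pair remains legal after an arbitrary prefix. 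Moreover the bridges have variable length in $\{0,\dots,\tau\}$. Both facts break the claims that rest on a clean product structure: the coding map $(\mathcal{D}'_n)^{\mathbb{N}}\to X$ is not well defined as stated, its injectivity is unaddressed, the pushforward of the Bernoulli measure is not $\sigma^{N_n+\tau}$-invariant when block lengths vary, and the identity $h_{\mu_n}(\sigma)=\log|\mathcal{D}'_n|/(N_n+O(\tau))$ is only an upper/lower bound sandwich that you would still have to realize by an actual ergodic measure.

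The paper avoids all of this by not using $(W)$-specification directly at the gluing stage: by Proposition 4.2 and Lemma 4.3 of Climenhaga--Thompson--Yamamoto, the hypotheses yield a collection $\mathcal{F}\subset\mathcal{L}$ with the \emph{free concatenation property} ($u,v\in\mathcal{F}\Rightarrow uv\in\mathcal{F}$) such that $\mathcal{L}$ is edit approachable by $\mathcal{F}$. Mapping the typical words into $\mathcal{F}$ and trimming to a common length $M$ gives $\Gamma_M$ with $e^{M(h-\epsilon/2)}\le\#\Gamma_M<e^{M(h+\epsilon/2)}$, and then $Y=(\Gamma_M)^{\mathbb{N}}$ is literally contained in $X$ with no bridges at all; $\Lambda=\bigcup_{i=0}^{M-1}\sigma^i(Y)$ is a compact invariant set whose topological entropy is pinched between $h\pm\epsilon/2$, and the variational principle (plus expansiveness) supplies the ergodic $\mu$ on $\Lambda$. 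You should restructure your argument around this free-concatenation collection; your remaining steps (the counting, the common-length pigeonhole, and the Wasserstein estimate via the edit-distance lemmas) then go through. One further small point: your bound $(Cn)^{g(n)}$ on the edit ball forces you to assume $g(n)\log n=o(n)$, which does not follow from $g(n)/n\to 0$ and cannot be ``arranged by enlarging $g$''; use instead the sharper count $Cn^C(e^{C\delta}e^{-\delta\log\delta})^n$ from Lemma 2.6 of Climenhaga--Thompson--Yamamoto, which needs only $\delta=g(n)/n\to0$.
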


\begin{proof}
	Fix $\mu\in\mathcal{M}^e_{\sigma}(X)$, $h\in(0,h_{\mu}(\sigma))$ and $\epsilon,\xi>0$.
	By \cite[Lemma 4.7]{Climenhaga2017}, let $\eta=\frac14\xi<\xi$ and then there exists $N$ such that $\#\mathcal{L}_N^{\nu,\eta}\ge e^{Nh}$.
	According to \cite[Lemma 2.7]{Zhao2018}, there exists a sequence of positive number $\{\eta_n\}$ such that for any $x,y\in X$ and $m,n\in\mathbb{N}$,
	\begin{equation}\label{small}
		\hat{d}(x_1x_2\ldots x_n,y_1y_2\ldots y_m)\leq g(n)\Rightarrow W(\delta_x^n,\delta_y^n)\leq\eta_n.
	\end{equation}
	Hence one can choose $N$ large enough such that $\eta_N<\eta$.
	It follows from \cite[Proposition 4.2 and Lemma 4.3]{Climenhaga2017} that there exist a subset $\mathcal{F}\subset\mathcal{L}$ with free concatenation property (for all $u, v\in\mathcal{F}$, we have $uv\in\mathcal{F}$) and a map $\phi: \mathcal{L}\rightarrow \mathcal{F}$ such that  $\hat{d}(\omega,\phi(\omega))<g(|\omega|)$, where $g:\mathbb{N}\to\mathbb{N}$ is a mistake function. Take $\chi<\epsilon$ small enough such that
	\begin{equation*}
	C\chi-\chi\log\chi<\frac14\epsilon.
	\end{equation*}
	We may always assume that $\frac{g(N)}{N}<\chi$. Then for any fixed $v\in\mathcal{F}$, according to \cite[Lemma 2.6]{Climenhaga2017},
	\begin{equation}\label{ed1}
	\#\{ w\in\mathcal{L}_N^{\nu,\eta}:\phi(w)=v\}\leq CN^C( e^{C\chi}e^{-\chi\log\chi} )^{N}<e^{\frac13N\epsilon}.
	\end{equation}
	
	Note that for any $w\in\mathcal{L}_N^{\nu,\eta}$, $N-g(N)\leq |\phi(w)|\leq N+g(N)$.
	For $N-g(N)\leq t\leq N+g(N)$, we denote by $\phi^t(\mathcal{L}_N^{\nu,\eta})\subset\phi(\mathcal{L}_N^{\nu,\eta})$ the collection of words whose length is $t$. Then there exists $t$ such that 
	\begin{equation}\label{ed2}
	\#\phi^t(\mathcal{L}_N^{\nu,\eta})\geq \frac{ \#\{ \phi(w):w\in\mathcal{L}_N^{\nu,\eta}\}}{2g(N)+1}.
	\end{equation} 
	According to pigeonhole principle and (\ref{ed1}, \ref{ed2}), we may assume that $\phi:\mathcal{L}_N^{\nu,\eta}\to\phi(\mathcal{L}_N^{\nu,\eta})$ is a bijection and elements in $\Gamma^*_M:=\phi(\mathcal{L}_N^{\nu,\eta})$ have same length $M$ by dropping some elements of $\mathcal{L}_N^{\nu,\eta}$.
	Hence,
	\begin{equation}\label{ed3}
	\#\Gamma^*_M\geq e^{M\left(h-\frac12\epsilon\right)}.
	\end{equation}
	
	Choose a subset $\Gamma_M\subset\Gamma^*_M$ such that 
	$$e^{M\left(h-\frac12\epsilon\right)}\le \#\Gamma_M< e^{M\left(h+\frac12\epsilon\right)}.$$
	Then let $Y=(\Gamma_M)^\mathbb{N}$ and $\Lambda=\bigcup_{i=0}^{M-1}\sigma^i(Y)$.
	It is clear that $\Lambda$ is compact and $\sigma$-invariant. Moreover,
	$$h_{\rm top}(\sigma,\Lambda)=\lim_{n\rightarrow\infty}\frac{1}{n}\log \#\mathcal{L}_n(\Lambda)\le\lim_{n\rightarrow\infty}\frac{1}{n}\log (\#\Gamma_M)^{\lfloor\frac{n}{M}\rfloor+1}\le h+\frac12\epsilon,$$
	and 
	$$h_{\rm top}(\sigma,\Lambda)=\lim_{n\rightarrow\infty}\frac{1}{n}\log \#\mathcal{L}_n(\Lambda)\ge\lim_{n\rightarrow\infty}\frac{1}{n}\log (\#\Gamma_M)^{\lfloor\frac{n}{M}\rfloor}\ge h-\frac12\epsilon,$$
	where $\lfloor\frac{n}{M}\rfloor$ denotes the largest integer no greater than $\frac{n}{M}$. 
	By the variational principle, there exists a $\mu\in\mathcal{M}^e_\sigma(\Lambda)\subset\mathcal{M}^e_\sigma(X)$, $|h_\mu(\sigma)-h|<\epsilon$. 
	
	There exists $\omega\in\mathcal{L}_N^{\nu,\eta}$ with $\phi(\omega)=v$ for any $v\in\Gamma^*_M$, which follows from the definition of $\Gamma^*_M$. Then by (\ref{small}), for any $x\in[\omega]$ and $y\in[v]$,
	$$W(\delta_y^M,\nu)\le W(\delta_y^M,\delta_x^N)+W(\delta_x^N,\nu)\le \eta_N+\eta<2\eta.$$
	Then if $y\in\Lambda$ is a generic point for $\mu$,
	$$W(\mu,\nu)\le W(\mu,\delta_y^n)+W(\delta_y^n,\nu),$$
	we have $W(\mu,\nu)<\xi$.
\end{proof}

\begin{lem}\label{l:ed2}
	Let $X$ be a shift space with $\mathcal{L}=\mathcal{L}(X)$. Suppose that $\mathcal{G}\subset\mathcal{L}$ has $(W)$-specification and $\mathcal{L}$ is edit approachable by $\mathcal{G}$. For $0\le \alpha<\alpha'<h_{\rm top}(\sigma)$, denote
	$$\mathcal{M}(\alpha,\alpha'):=\{\mu\in\mathcal{M}_\sigma(X):\alpha\le h_{\mu}(\sigma)<\alpha'\}.$$
	Then $\mathcal{M}_e(\alpha,\alpha'):=\mathcal{M}(\alpha,\alpha')\cap\mathcal{M}^e_\sigma(X)$ is dense in $\mathcal{M}^\alpha(X,\sigma)$.
\end{lem}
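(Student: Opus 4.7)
The plan is to combine the descent statement of Lemma \ref{l:ed} (which only produces ergodic approximants of entropy strictly \emph{below} that of its seed) with a push-up step that first raises $\mu_0$ to a nearby invariant measure of entropy strictly greater than some intermediate target $h^*\in(\alpha,\alpha')$, then replaces it by a nearby ergodic measure of comparable entropy, and finally descends via Lemma \ref{l:ed} back into the window $[\alpha,\alpha')$. Throughout, fix $\mu_0\in\mathcal{M}^\alpha(X,\sigma)$ and $\xi>0$; the goal is to produce $\mu\in\mathcal{M}_e(\alpha,\alpha')$ with $W(\mu,\mu_0)<\xi$.

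First, the hypothesis $\alpha'<h_{\rm top}(\sigma)$ and the variational principle furnish an ergodic $\nu^*$ with $h_{\nu^*}(\sigma)>\alpha'$. Set $\hat{\mu}_\lambda:=(1-\lambda)\mu_0+\lambda\nu^*$. Convexity of $W$ in each argument gives $W(\hat{\mu}_\lambda,\mu_0)\le\lambda W(\nu^*,\mu_0)$, while affinity of metric entropy on $\mathcal{M}_\sigma(X)$ yields $h_{\hat{\mu}_\lambda}=(1-\lambda)h_{\mu_0}+\lambda h_{\nu^*}>\alpha$ for every $\lambda\in(0,1)$. Pick $\lambda>0$ small enough that $W(\hat{\mu}_\lambda,\mu_0)<\xi/3$ and fix $h^*\in(\alpha,\min\{\alpha',h_{\hat{\mu}_\lambda}\})$; this interval is nonempty since $h_{\hat{\mu}_\lambda}>\alpha$.

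Next, I would produce an ergodic $\nu$ with $W(\nu,\hat{\mu}_\lambda)<\xi/3$ and $h_\nu>h^*$. This is an entropy-density statement for $\hat{\mu}_\lambda$, which is generally non-ergodic, so Lemma \ref{l:ed} cannot be invoked directly. The natural approach is to decompose $\hat{\mu}_\lambda$ into its ergodic components, apply the counting bound \cite[Lemma 4.7]{Climenhaga2017} to those components whose entropy exceeds $h^*$ in order to obtain exponentially many typical words producing empirical measures close to each component, and then glue these word families in the proportions prescribed by the ergodic decomposition using the free-concatenation subset $\mathcal{F}$ supplied by $(W)$-specification and edit approachability; the resulting subshift has topological entropy at least $h^*$, so the variational principle supplies an ergodic $\nu$ of the required type.

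Finally, apply Lemma \ref{l:ed} to this ergodic $\nu$ with target entropy $h=h^*$, tolerance $\epsilon<\min\{h^*-\alpha,\alpha'-h^*\}$, and proximity parameter $\xi/3$; the output is an ergodic $\mu$ with $W(\mu,\nu)<\xi/3$ and $h_\mu\in(h^*-\epsilon,h^*+\epsilon)\subset[\alpha,\alpha')$. The triangle inequality then yields $W(\mu,\mu_0)<\xi$, as desired. The main obstacle is the middle step: approximating the non-ergodic $\hat{\mu}_\lambda$ by an ergodic measure of entropy still exceeding $h^*$ is itself a non-uniform entropy-density result, and, though parallel in spirit to Lemma \ref{l:ed}, it requires careful cross-component gluing to simultaneously preserve closeness in $W$ and the exponential count $\gtrsim e^{Nh^*}$.
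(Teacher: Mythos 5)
Your argument follows the same three-step route as the paper: perturb $\mu_0$ toward a high-entropy ergodic measure to push the entropy strictly above $\alpha$ while moving less than $\xi/3$ in $W$, approximate the resulting (generally non-ergodic) measure by an ergodic one of still-large entropy, and then descend into $[\alpha,\alpha')$ via Lemma \ref{l:ed}. The only place you diverge is the middle step, which you correctly identify as the crux but leave as a sketch: this is precisely the entropy density of ergodic measures for systems with non-uniform structure, which the paper does not reprove but quotes as \cite[Proposition 3.6]{Climenhaga2017} (for any $\mu_1\in\mathcal{M}_\sigma(X)$ and $h<h_{\mu_1}(\sigma)$ there is an ergodic $\mu_2$ arbitrarily $W$-close to $\mu_1$ with $h_{\mu_2}(\sigma)>h$). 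Your outline of how one would prove it -- ergodic decomposition, the counting bound of \cite[Lemma 4.7]{Climenhaga2017}, and gluing word families through the free-concatenation collection $\mathcal{F}$ -- is indeed the mechanism behind that proposition, so once the citation is in hand there is no gap; carrying out the cross-component gluing from scratch would be substantially more work than the rest of the lemma and is not needed. Two cosmetic remarks: the paper realizes the push-up step with an ergodic measure of \emph{maximal} entropy (which exists here by expansiveness) rather than an arbitrary $\nu^*$ with $h_{\nu^*}>\alpha'$, and it only needs the ergodic approximant to have entropy $>\alpha$ rather than $>h^*$; both variants work equally well.
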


\begin{proof}
	Fix any $\mu\in\mathcal{M}^\alpha(X,\sigma)$ and $\eta>0$. 
	Let $\mu_1:=\mu+\frac{\eta}{3D}(\mu_{max}-\mu)$ where $D$ is the diameter of $\mathcal{M}(X,\sigma)$ with respect to the Wasserstein metric $W$ and $\mu_{max}$ is an ergodic measure with maximal entropy, that is, $h_{\mu_{max}}(\sigma)=h_{\rm top}(X,\sigma)$. 
	Then 
	$$W(\mu,\mu_1)<\frac{\eta}{3}\text{ and }h_{\mu_1}(\sigma)>\alpha.$$ 
	By \cite[Proposition 3.6]{Climenhaga2017}, there exists $\mu_2\in\mathcal{M}^e(X,\sigma)$ such that 
	$$W(\mu_1,\mu_2)<\frac{\eta}{3}\text{ and }h_{\mu_2}(\sigma)>\alpha.$$
	By Lemma \ref{l:ed}, there exists $\mu_3\in\mathcal{M}^e(X,\sigma)$ such that 
	$$W(\mu_2,\mu_3)<\frac{\eta}{3}\text{ and }\alpha\le h_{\mu_3}(\sigma)<\min\{h_{\mu_2}(\sigma),\alpha'\}.$$
	Therefore, we have $W(\mu,\mu_3)<\eta$ and $\mu_3\in\mathcal{M}_e(\alpha,\alpha')$, which ends the proof.
\end{proof}

\begin{proof}[Proof of Theorem \ref{t:entropydense}]
	Since the map $\mu\mapsto h_{\mu}(\sigma)$ is upper semi-continuous, we have $\mathcal{M}^\alpha(X,\sigma)$ is a compact metric subspace of $\mathcal{M}(X,\sigma)$, which is a Baire space. 
	Since $\mathcal{M}^e_\sigma(X)$ is a $G_\delta$ set in $\mathcal{M}_\sigma(X)$ (\cite[Proposition 5.7]{Denker1976}), then the set $\mathcal{M}^e_\sigma(X)\cap\mathcal{M}^\alpha(X,\sigma)$ is a $G_\delta$ set in $\mathcal{M}^\alpha(X,\sigma)$. 
	By the upper semi-continuity of the entropy map, for any $\alpha<\alpha'$, 
	$$\mathcal{M}(\alpha,\alpha')=\{\mu\in\mathcal{M}_\sigma(X):h_{\mu}(\sigma)<\alpha'\}\cap\mathcal{M}^\alpha(X,\sigma)$$ 
	is relatively open in $\mathcal{M}^\alpha(X,\sigma)$. 
	Then $\mathcal{M}^e(\alpha,\alpha')=\mathcal{M}(\alpha,\alpha')\cap\mathcal{M}^e_\sigma(X)\cap\mathcal{M}^\alpha(X,\sigma)$ is a $G_\delta$ set in $\mathcal{M}^\alpha(X,\sigma)$. 
	Combining with Lemma \ref{l:ed2}, it implies that $\mathcal{M}^e(\alpha,\alpha')$ is a residual set in $\mathcal{M}^\alpha(X,\sigma)$. 
	Then 
	$$\mathcal{M}^e(X,\sigma,\alpha)=\bigcap_{n=1}^\infty\mathcal{M}(\alpha,\alpha+\frac1n)\cap\mathcal{M}^e_\sigma(X)\cap\mathcal{M}^\alpha(X,\sigma)$$
	is also a residual set in $\mathcal{M}^\alpha(X,\sigma)$, which ends the proof.
\end{proof}

Next, we prove the denseness of intermediate pressures.
The variational principle of topological pressure states $P_{ top}(\sigma,f)=\sup\{ P_\mu(\sigma,f):\mu\in\mathcal{M}_\sigma(X) \}$. Since $(X,\sigma)$ is expansive that there exists equilibrium state $\mu_P\in\mathcal{M}_\sigma^e(X)$, $P_{\rm top}(\sigma,f)=P_{\mu_P}(\sigma,f)$. 
We always assume that $h_{\rm top}(\sigma)>0$.

\begin{lem}\label{presslem}
	Let $X$ be a shift space satisfying the condition in Theorem \ref{key2}. Assume that $P_{\rm inf}(\sigma,f)<\alpha<\alpha'<P_{\rm top}(\sigma,f)$. Then the set
	\begin{equation*}
		\mathcal{P}(\alpha,\alpha')=\left\{
		\mu\in\mathcal{M}_\sigma^e(X):\int f\dif\mu\leq\alpha\leq P_{\mu}(\sigma,f)<\alpha'
		\right\}
	\end{equation*}
	is dense in
	\begin{equation*}
		\mathcal{P}^\alpha=\left\{
		\mu\in\mathcal{M}_\sigma(X): \int f\dif\mu\leq\alpha\leq P_{\mu}(\sigma,f)
		\right\}.
	\end{equation*}
\end{lem}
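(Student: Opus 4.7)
The strategy parallels the entropy argument of Lemma \ref{l:ed2}, with two additional complications: we must simultaneously track both $\int f\,d\mu$ (keeping it $\le\alpha$) and $P_\mu(\sigma,f)$ (keeping it in $[\alpha,\alpha')$), and we need a pressure analog of Lemma \ref{l:ed}. I would produce the approximating ergodic $\mu_3\in\mathcal{P}(\alpha,\alpha')$ in three steps, each moving by at most $\eta/3$ in the Wasserstein metric.

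Starting from $\mu\in\mathcal{P}^\alpha$ and $\eta>0$, the first step is to perturb $\mu$ to a nearby invariant measure $\mu_1$ satisfying \emph{both} strict inequalities $\int f\,d\mu_1<\alpha$ and $P_{\mu_1}(\sigma,f)>\alpha$. Pick the equilibrium state $\mu_P$ (so $P_{\mu_P}=P_{\rm top}(\sigma,f)>\alpha$) and a measure $\mu_0$ with $P_{\mu_0}<\alpha$ (which exists because $P_{\rm inf}<\alpha$, and forces $\int f\,d\mu_0<\alpha$ since $h_{\mu_0}\ge 0$). A short computation shows that a convex combination $\tilde\mu=(1-r)\mu_P+r\mu_0$ with $r\in(0,1)$ can be chosen to satisfy both $\int f\,d\tilde\mu<\alpha$ and $P_{\tilde\mu}>\alpha$ whenever $h_{\mu_P}+h_{\mu_0}>0$, which can always be arranged. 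Then $\mu_1:=(1-s)\mu+s\tilde\mu$ for small $s>0$ does the job.

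Next, apply \cite[Proposition 3.6]{Climenhaga2017} to obtain an ergodic $\mu_2$ with $W(\mu_1,\mu_2)<\eta/3$; by the same mechanism used in the proof of Lemma \ref{l:ed2}, the strict bound $P_{\mu_2}>\alpha$ is preserved, and continuity of $\nu\mapsto\int f\,d\nu$ keeps $\int f\,d\mu_2<\alpha$. Finally, I would run the pressure analog of Lemma \ref{l:ed}. Set the target entropy $h^*:=\alpha-\int f\,d\mu_2$; note $0<h^*<h_{\mu_2}$, the upper bound because $P_{\mu_2}>\alpha$. Repeat the subshift construction from the proof of Lemma \ref{l:ed}: take $\mu_2$-typical words in $\mathcal{L}_N^{\mu_2,\eta_0}$ for some small $\eta_0$, edit them into the free-concatenation family $\mathcal{F}$ via $\phi$, and select $\Gamma_M\subset\phi(\mathcal{L}_N^{\mu_2,\eta_0})$ with $\#\Gamma_M$ pinned near $e^{Mh^*}$ (rather than $e^{Mh}$); then $\Lambda:=\bigcup_i\sigma^i((\Gamma_M)^{\mathbb{N}})$ supports an ergodic measure $\mu_3$. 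The variational-principle and Wasserstein estimates from the proof of Lemma \ref{l:ed} give $|h_{\mu_3}-h^*|<\epsilon/2$ and $W(\mu_2,\mu_3)<\eta/3$, while typicality additionally forces $|\int f\,d\mu_3-\int f\,d\mu_2|<\epsilon/2$ once $\eta_0$ is small. Hence $P_{\mu_3}$ is within $\epsilon$ of $\alpha$; picking $\epsilon<\alpha'-\alpha$ and a slight upward bias on $\#\Gamma_M$ places $P_{\mu_3}$ into $[\alpha,\alpha')$, and $\int f\,d\mu_3<\alpha$ is automatic.

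The main technical obstacle is Step 3: it is \emph{not} Lemma \ref{l:ed} restated, since the entropy target $h^*$ must be chosen to depend on $\int f\,d\mu_2$ so that the \emph{pressure}---not the entropy---lands in $[\alpha,\alpha')$. Step 1 is also mildly delicate because of the joint constraint on the integral and the pressure, but reduces to an elementary inequality on the two reference measures.
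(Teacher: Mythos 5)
Your proposal is correct, and its core mechanism is the same as the paper's: once one is in the position $\int f\,\mathrm{d}\mu<\alpha<P_\mu(\sigma,f)$, find a nearby ergodic measure whose entropy is tuned to the target value $\approx\alpha-\int f\,\mathrm{d}\mu$ so that the pressure lands in $[\alpha,\alpha')$ while the integral stays below $\alpha$. Where you genuinely diverge is in the reduction to that position. The paper splits $\mathcal{P}^\alpha$ into five cases according to which of the inequalities $\int f\,\mathrm{d}\mu_0\le\alpha\le P_{\mu_0}(\sigma,f)$ are equalities, handling each by an ad hoc convex combination (Case 5, where $\int f\,\mathrm{d}\mu_P>\alpha$, being the delicate one); you collapse all of this into a single perturbation $\mu_1=(1-s)\mu+s\bigl((1-r)\mu_P+r\mu_0\bigr)$, and the feasibility of choosing $r$ reduces to the inequality $h_{\mu_P}(\alpha-\int f\,\mathrm{d}\mu_0)+(\int f\,\mathrm{d}\mu_P-\alpha)\,h_{\mu_0}>0$, which I checked does hold under your hypothesis $h_{\mu_P}+h_{\mu_0}>0$ (arrangeable since $h_{\rm top}(\sigma)>0$). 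This is a cleaner and arguably more robust decomposition. Two small remarks: in your Step 3 you could simply invoke Theorem \ref{t:entropydense} to get an ergodic measure of \emph{exact} entropy $h^{**}$ near $\mu_2$ (as the paper does in its Case 1), rather than re-running the construction of Lemma \ref{l:ed} with two-sided errors; and your ``slight upward bias'' does need to be made quantitative --- choose the target $h^{**}=h^*+\delta$ with $\delta$ exceeding the sum of the entropy and integral errors yet small enough that $h^{**}<h_{\mu_2}$ and $P_{\mu_3}<\alpha'$ --- since Lemma \ref{l:ed} alone gives only $|h_{\mu_3}-h^{**}|<\epsilon$ and membership in $\mathcal{P}(\alpha,\alpha')$ requires the one-sided bound $P_{\mu_3}\ge\alpha$. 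Neither point is a gap; both are routine to fill.
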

\begin{proof}
	For $\mu_0\in\mathcal{P}^\alpha$ and $\delta_0>0$, we prove the result
	\begin{equation*}
		\mathcal{P}(\alpha,\alpha')\cap B(\mu_0,\delta_0)\neq\emptyset
	\end{equation*}
	in following $5$ cases.
	\\
	{\bf Case $1$.} $\int f\dif\mu_0<\alpha< P_{\mu_0}(\sigma,f).$
	
	Set 
	\begin{equation*}
		\eta=\frac{1}{3}\min\left\{ \alpha-\int f\dif\mu_0, \alpha'-\alpha, P_{\mu_0}(\sigma,f)-\alpha \right\}.
	\end{equation*}
	Since $\mu\mapsto\int f\dif\mu$ is continuous, there exists $\delta_1<\delta_0$ such that for any $m\in B(\mu_0,\delta_0)$,
	\begin{equation*}
		\left| \int f\dif m-\int f\dif\mu_0 \right|<\eta.
	\end{equation*}
	On the other hand, according to Theorem \ref{t:entropydense} there is $\nu\in B(\mu_0,\delta_1)\cap\mathcal{M}^e_\sigma(X)$ such that
	\begin{equation*}
		h_\nu(\sigma)=\min\left\{
		P_{\mu_0}(\sigma,f),\alpha'
		\right\}
		-\int f\dif\mu_0-\eta\in(0,h_{\mu_0}(\sigma)).
	\end{equation*}
	One has
	\begin{align*}
		P_\nu(\sigma,f)=\min\left\{
		P_{\mu_0}(\sigma,f),\alpha'
		\right\}
		-\int f\dif\mu_0&+\int f\dif\nu-\eta>\min\left\{
		P_{\mu_0}(\sigma,f),\alpha'
		\right\}-2\eta>\alpha;\\
		P_\nu(\sigma,f)<\alpha'-\int f\dif\mu_0&-\eta+\int f\dif\nu<\alpha'.
	\end{align*}
	That is $\nu\in B(\mu_0,\delta_0)\cap\mathcal{P}(\alpha,\alpha')$.
	
	{\noindent\bf Case $2$.} $\int f\dif\mu_0=\alpha<P_{\mu_0}(\sigma,f)$.
	
	Take $\nu_0\in\mathcal{M}_\sigma(X)$ such that $P_{\rm inf}(\sigma,f)<P_{\nu_0}(\sigma,f)<\alpha.$  Note that it for any $t\in[0,1]$, it holds that
	\begin{equation*}
		P_{t\mu_0+(1-t)\nu_0}(\sigma,f)=tP_{\mu_0}(\sigma,f)+(1-t)P_{\nu_0}(\sigma,f).
	\end{equation*}
	Hence there exists $\mu\in B(\mu_0,\delta_1)$ with 
	$\delta_1<\delta_0/2$ and $P_{\mu}(\sigma,f)>\alpha>\int f\dif\mu$. Applying Case $1$ we can find
	\begin{equation*}
		\nu\in B(\mu,\delta_1)\cap\mathcal{P}(\alpha,\alpha')\subset B(\mu_0,\delta_0)\cap\mathcal{P}(\alpha,\alpha').
	\end{equation*} 
	
	{\noindent\bf Case $3$.} $\int f\dif\mu_0<\alpha=P_{\mu_0}(\sigma,f)$.
	
	There exists $\mu_1=t_0\mu_P+(1-t_0)\mu_0$, $t_0<\delta_0/2$ such that $\int f\dif\mu_1<\alpha$. At this time one has
	\begin{equation*}
		\mu_1\in B(\mu_0,\delta_0/2),\ \int f\dif\mu_1<\alpha< P_{\mu_1}(\sigma,f).
	\end{equation*}
	Applying Case $1$ again, there exists $\nu$ with 
	\begin{equation*}
		\nu\in B(\mu_1,\delta_0/2)\cap\mathcal{P}(\alpha,\alpha')\subset B(\mu_0,\delta_0)\cap\mathcal{P}(\alpha,\alpha').
	\end{equation*}
	
	{\noindent\bf Case $4$.} $\int f\dif\mu_0=\alpha=P_{\mu_0}(\sigma,f)$ and $\int f\dif\mu_P\leq\alpha$.
	
	Similar argument as Case $2$ there exists
	\begin{equation*}
		\mu_1\in B(\mu_0,\delta_0/2),\ \int f\dif\mu_1\leq\alpha< P_{\mu_1}(\sigma,f).
	\end{equation*}
	Applying Case $1$ (if $\int f\dif\mu_1<\alpha$) or Case $2$ (if $\int f\dif\mu_1=\alpha$), we have $B(\mu_0,\delta_0)\cap\mathcal{P}(\alpha,\alpha')\neq\emptyset$.
	
	{\noindent\bf Case $5$.} $\int f\dif\mu_0=\alpha=P_{\mu_0}(\sigma,f)$ and $\int f\dif\mu_P>\alpha$.
	
	Fix $\nu_0$ as in Case $2$ with $\int f\dif\nu_0\leq P_{\nu_0}(\sigma,f)<\alpha$. Pick $\nu_1\in\mathcal{M}_\sigma(X)$ with $h_{\nu_1}>0$ and $t\in(0,1)$ such that for $\mu_1=t\nu_1+(1-t)\nu_0$,
	\begin{equation*}
	\int f\dif\mu_1<\alpha,\ h_{\mu_1}(\sigma)>0.
	\end{equation*}
	Since $\int f\dif\mu_1<\alpha<\int f\dif\mu_P$, there exists $\mu_2\in\mathcal{M}_\sigma(X)$ with $\int f\dif\mu_2=\alpha$ and $h_{\mu_2}(\sigma)>0$, hence $P_{\mu_2}(\sigma,f)>\alpha$. Consider the collection $\{ k\mu_0+(1-k)\mu_2:0\leq k\leq 1 \}$. Then we can pick $\mu_3$ such that
	\begin{equation*}
		\mu_3\in B(\mu_0,\delta_0/2),\ P_{\mu_3}(\sigma,f)>\alpha=\int f\dif\mu_3.
	\end{equation*}
	Applying Case $2$ again we have $\emptyset\neq B(\mu_3,\delta_0/2)\cap \mathcal{P}(\alpha,\alpha')\subset B(\mu_0,\delta_0)\cap\mathcal{P}(\alpha,\alpha')$.
\end{proof}

Finally, we can prove the following theorem, which implies Theorem \ref{key2}.

\begin{thm}
	Let $X$ be a shift space satisfying the condition in Theorem \ref{key2}. Then for any $\alpha\in\left( P_{\rm inf}(f),P_{\rm top}(\sigma,f) \right)$, the set $\{ \mu\in\mathcal{M}_\sigma^e(X):P_\mu(\sigma,f)=\alpha \}$ is a residual subset in the nonempty compact metric subspace $\mathcal{P}^\alpha$.
\end{thm}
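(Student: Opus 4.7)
The structure mirrors that of Theorem~\ref{t:entropydense}; the task is to realize $\{\mu\in\mathcal{M}_\sigma^e(X):P_\mu(\sigma,f)=\alpha\}$ as a dense $G_\delta$ inside the Baire space $\mathcal{P}^\alpha$.

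First I would verify the basic topology of $\mathcal{P}^\alpha$. Because $(X,\sigma)$ is a subshift, $\mu\mapsto h_\mu(\sigma)$ is upper semi-continuous, while $\mu\mapsto\int f\,\dif\mu$ is continuous; hence $\mu\mapsto P_\mu(\sigma,f)$ is upper semi-continuous, and the two closed conditions defining $\mathcal{P}^\alpha$ make it a closed, hence compact, metric subspace of $\mathcal{M}_\sigma(X)$, so it is a Baire space. Nonemptiness follows from the affineness of pressure on the simplex of invariant measures: since $\alpha<P_{\rm top}(\sigma,f)$ is attained by the equilibrium state $\mu_P$ and values below are attained by convex combinations with measures of small pressure, the intermediate value $\alpha$ is realized by some $\mu_0\in\mathcal{M}_\sigma(X)$, and then $\int f\,\dif\mu_0\leq P_{\mu_0}(\sigma,f)=\alpha$ automatically.

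Next I would rewrite
\[
\{\mu\in\mathcal{M}_\sigma^e(X):P_\mu(\sigma,f)=\alpha\}
=\bigl(\mathcal{M}_\sigma^e(X)\cap\mathcal{P}^\alpha\bigr)\cap\bigcap_{n\geq1}O_n,
\]
where $O_n:=\{\mu\in\mathcal{P}^\alpha:P_\mu(\sigma,f)<\alpha+\tfrac{1}{n}\}$. By upper semi-continuity each $O_n$ is open in $\mathcal{P}^\alpha$, and by \cite[Proposition~5.7]{Denker1976} the set $\mathcal{M}_\sigma^e(X)$ is $G_\delta$ in $\mathcal{M}_\sigma(X)$, so $\mathcal{M}_\sigma^e(X)\cap\mathcal{P}^\alpha$ is a $G_\delta$ subset of $\mathcal{P}^\alpha$. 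Thus the target set is a countable intersection of open subsets of the Baire space $\mathcal{P}^\alpha$, and it only remains to establish density of each of these opens.

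This density is exactly where Lemma~\ref{presslem} is invoked. For each $n$, the set $\mathcal{P}(\alpha,\alpha+\tfrac{1}{n})$ sits inside $O_n\cap\mathcal{M}_\sigma^e(X)$ and, by the lemma, is dense in $\mathcal{P}^\alpha$; hence every $O_n$ is open and dense, and every one of the $G_\delta$-defining opens containing $\mathcal{M}_\sigma^e(X)\cap\mathcal{P}^\alpha$ is also open and dense (since it contains the dense set $\mathcal{P}(\alpha,\alpha+1)$ already consisting of ergodic measures). Applying the Baire category theorem yields residuality, and Theorem~\ref{key2} is an immediate consequence.

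I expect no serious obstacle here: the heavy lifting (the five-case intermediate pressure approximation argument) is already carried out in Lemma~\ref{presslem}, and the affineness plus upper semi-continuity of pressure make the topological bookkeeping straightforward. The only mild subtlety is checking that $\mathcal{P}^\alpha$ is nonempty when $\int f\,\dif\mu_P>\alpha$, which is handled by the same convex-combination trick as Case~5 of Lemma~\ref{presslem}.
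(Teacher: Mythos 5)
Your proposal is correct and follows essentially the same route as the paper: the same decomposition of the target set as the intersection of the $G_\delta$ set of ergodic measures in $\mathcal{P}^\alpha$ with the open sets $\{P_\mu(\sigma,f)<\alpha+\tfrac1n\}$, with density supplied by Lemma~\ref{presslem} and residuality by Baire category. The only cosmetic point (shared with the paper's own $\bigcap_k\mathcal{P}(\alpha,\alpha+\tfrac1k)$) is that one should restrict to $n$ large enough that $\alpha+\tfrac1n<P_{\rm top}(\sigma,f)$ so the lemma applies.
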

\begin{proof}
	Since $\alpha\in\left( P_{ inf}(f),P_{\rm top}(\sigma,f) \right)$, there exist $\mu_1,\mu_2\in\mathcal{M}_\sigma(X)$ with $P_{\mu_1}(\sigma,f)<\alpha<P_{\mu_2}(\sigma,f)$. Then there exists $t\in(0,1)$ and $\mu=t\mu_1+(1-t)\mu_2$ with $P_{\mu}(\sigma,f)=\alpha$. This ensures that $\mathcal{P}^\alpha$ is nonempty. Note that
	\begin{equation*}
		\mathcal{P}^\alpha=\left\{  \mu\in\mathcal{M}_\sigma(X):\int f\dif\mu\leq\alpha \right\}
		\cap
		\left\{ \mu\in\mathcal{M}_\sigma(X): P_\mu(\sigma,f)\geq\alpha
		\right\}
	\end{equation*}
	is a closed subset since $\mu\mapsto\int f\dif\mu$ is continuous and $\mu\mapsto P_\mu(\sigma,f)$ is upper semi-continuous.
	
	It follows from $\mathcal{M}_\sigma^e(X)\subset\mathcal{M}_\sigma(X)$ is a $G_\delta$ subset that $\mathcal{M}_\sigma^e(X)\cap\mathcal{P}^\alpha$ is a $G_\delta$ subset of $\mathcal{P}^\alpha$. According to Lemma \ref{presslem} $\mathcal{P}(\alpha,\alpha')$ is dense in $\mathcal{P}^\alpha$, $\forall\alpha'\in(\alpha,P_{\rm top}(\sigma,f))$. The set $\mathcal{P}_{\alpha'}=\{ \mu\in\mathcal{M}_\sigma(X): P_\mu(\sigma,f)<\alpha' \}$ is open, then
	\begin{equation*}
		\mathcal{P}(\alpha,\alpha')=\mathcal{M}_\sigma^e(X)\cap\mathcal{P}^\alpha\cap\mathcal{P}_{\alpha'},
	\end{equation*}
	is a residual subset of $\mathcal{P}^\alpha$. So
	\begin{equation*}
		\left\{ \mu\in\mathcal{M}_\sigma^e(X):P_\mu(\sigma,f)=\alpha \right\}=\bigcap_{k\geq1}\mathcal{P}\left(\alpha,\alpha+\frac{1}{k}\right)
	\end{equation*}
	is a residual set.
\end{proof}

\section{Proof of Theorem \ref{key1}}
In this section we prove our main result. The proof consists of finding a Moran subset in the set of points with high pointwise emergence after picking up a sequence of ergodic measures whose measure theoretic pressures are close to topological pressure using Theorem \ref{key2}. 
\subsection{Setting up the itinerary}
We may assume that $P_{\rm top}(\sigma,f)>0$. According to Theorem \ref{key2} for any small enough $\epsilon>0$, we can take a sequence of ergodic invariant measure $\mathcal{T}=\{ \mu^{(l)} \}_{l\geq 0}$ such that $0<P_{\rm top}(\sigma,f)-\epsilon<P_{\mu^{(0)}}(\sigma,f)<P_{\mu^{(1)}}(\sigma,f)<\ldots<P_{\rm top}(\sigma,f)$. For the case of topological entropy, we use Theorem \ref{t:entropydense} to pick the sequence of ergodic measures.

Take a strictly decreasing sequence $\{\epsilon_L\}_{L\geq 1}$ of positive numbers with $\epsilon_L\to0$ and $\epsilon_1<\epsilon$. For $L\geq 1$ and $0\leq l\leq L$, $\tilde{n}\in\mathbb{N}$, define
\begin{align*}
	\Gamma_{L,l}(\tilde{n})=\bigg\{  
	x\in X: \forall n\geq\tilde{n},\ &W(\delta_x^n, \mu^{(l)})<\frac{\epsilon_L}{2}, \bigg|\frac{S_nf(x)}{n}-\int f\dif\mu^{(l)}\bigg|<\frac{\epsilon_L}{2},\\
	&\hspace{1cm} e^{-n\left(h_{\mu^{(l)}}(\sigma)+\epsilon_L\right)}\leq \mu^{(l)}([x]_n)\leq
    e^{-n\left(h_{\mu^{(l)}}(\sigma)-\epsilon_L\right)}
	\bigg\}.
\end{align*}
Since $\mu^{(l)}$ is ergodic, according to Birkhoff's ergodic theorem and Brin-Katok's entropy formula, we have
\begin{equation}\label{to1}
	\lim_{\tilde{n}\to\infty}\mu^{(l)}(\Gamma_{L,l}(\tilde{n}))=1.
\end{equation}

For large enough $n_0$, take a strictly decreasing sequence $\{ \hat{\epsilon}_L \}$ with $\hat{\epsilon}_L\to0$, $\hat{\epsilon}_1<\frac{1}{2}$ and
\begin{equation*}
	\log(1-\hat{\epsilon}_L)\geq -n_0\epsilon_L.
\end{equation*}
According to (\ref{to1}), we can take $\tilde{n}_{L,l}>n_0$ such that
\begin{equation}\label{cedu}
	\mu^{(l)}(\Gamma_{L,l}(\tilde{n}_{L,l}))>1-\hat{\epsilon}_L.
\end{equation} 
Write $\Gamma_{L,l}=\Gamma_{L,l}(\tilde{n}_{L,l})$ for short. 

For $L\geq 1$, take $\{ {\bf t}_{L,j} \}_{j=1}^{J(L)}\subset A_L$ such that $A_L\subset\bigcup_{j=1}^{J(L)} B({\bf t}_{L,j},\frac{\epsilon_L}{L+1})$. Write $\mu_{L,j}=\mu_{\bf t}$ for ${\bf t}={\bf t}_{L,j}$. 
Let $\mathbb{A}_1=\{ (L,j): L\geq 1, 1\leq j\leq J(L) \}$, $\mathbb{A}_2=\{ (L,j,l): L\geq 1, 1\leq j\leq J(L), 0\leq l\leq L \}$, and consider the lexicographic order on both two sets. For $(L,j)\in\mathbb{A}_1$ (or $(L,j,l)\in\mathbb{A}_2$), denote by $(L^\ast,j^\ast)$ (or $(L^\ast,j^\ast,l^\ast)$) its predecessor, and $(L_\ast,j_\ast)$ (or $(L_\ast,j_\ast,l_\ast)$) its successor with respect to the order.

For each $(L,j)\in \mathbb{A}_1$, there exists a finite sequences of positive integers ${\bf n}'_{L,j}=\{  n'_{L,j}(l) \}_{0\leq l\leq L}$ according to \cite[Lemma 5.3]{Kiriki2019}  satisfying
\begin{align*}
n'_{L,j}(l)\geq \tilde{n}_{L,l},\\
|\overline{t}({\bf n}'_{L,j})-{\bf t}_{L,j}|\leq\frac{\epsilon_L}{L+1},
\end{align*}
where $
\overline{t}({\bf n}'_{L,j}):=\left(\frac{n'_{L,j}(0)}{m'_{(L,j)}},\ldots,\frac{n'_{L,j}(L)}{m'_{(L,j)}}\right)\in A_L$, $m'_{(L,j)}:=\sum_{0\leq l\leq L}n'_{L,j}(l)$.


For any $(L,j,l)\in\mathbb{A}_2$ define
\begin{equation*}
\widetilde{\mathcal{W}}(L,j,l)=\left\{ [x]_{n'_{L,j}(l)}:x\in\Gamma_{L,l} \right\}.
\end{equation*}
Note that $n'_{L,j}(l)\geq \tilde{n}_{L,l}$, hence we have
\begin{equation}\label{close}
\max_{0\leq l\leq L}\sup_{x\in\Gamma_{L,l}}W(\delta_x^{n'_{L,j}(l)},\mu^{(l)})<\frac{\epsilon_L}{2}.
\end{equation}
Moreover we have $\mu^{(l)}\left(\bigcup\widetilde{\mathcal{W}}(L,j,l) \right)>1-\hat{\epsilon}_L$. By the definition of $\Gamma_{L,l}$, one has
\begin{equation*}
	\#\widetilde{\mathcal{W}}(L,j,l)\geq(1-\hat{\epsilon}_L)e^{n'_{L,j}(l)\left(h_{\mu^{(l)}}(\sigma)-\epsilon_L\right)}.
\end{equation*}

By the assumption and \cite[Proposition 4.2 and Lemma 4.3]{Climenhaga2017}, there exists $\mathcal{F}$ with free concatenation property and $\mathcal{L}$ is editable approachable by $\mathcal{F}$. That is, there exists a mistake function $g$ and a map $\phi:\mathcal{L}\to\mathcal{F}$ such that $\hat{d}(w,\phi(w))\leq g(|w|)$, $\forall w\in\mathcal{L}$. Take $\chi<\epsilon_L$ small enough such that
\begin{equation*}
	C\chi-\chi\log\chi<\frac{1}{2}\epsilon_L.
\end{equation*}
We may always assume that $\frac{g(n'_{L,j}(l))}{n'_{L,j}(l)}<\chi$. Then for any fixed $v\in\mathcal{L}$, according to \cite[Lemma 2.6]{Climenhaga2017},
\begin{equation}\label{pp1}
	\#\{ w\in\widetilde{\mathcal{W}}(L,j,l):\phi(w)=v \}\leq Cn'_{L,j}(l)^C( e^{C\chi}e^{-\chi\log\chi} )^{n'_{L,j}(l)}<e^{n'_{L,j}(l)\epsilon_L}.
\end{equation}

Note that for any $w\in\widetilde{\mathcal{W}}(L,j,l)$, $n'_{L,j}(l)-g(n'_{L,j}(l))\leq |\phi(w)|\leq n'_{L,j}(l)+g(n'_{L,j}(l))$.
For $n'_{L,j}(l)-g(n'_{L,j}(l))\leq t\leq n'_{L,j}(l)+g(n'_{L,j}(l))$, we denote by $\phi^t(\widetilde{\mathcal{W}}(L,j,l))\subset\phi(\widetilde{\mathcal{W}}(L,j,l))$ the collection of words whose length is $t$. Then there exists $t$ such that 
\begin{equation}\label{pp2}
	\#\phi^t(\widetilde{\mathcal{W}}(L,j,l))\geq \frac{ \#\{ \phi(w):w\in\widetilde{\mathcal{W}}(L,j,l)  \} }{2g(n_{L,j}(l))+1}.
\end{equation} 
According to pigeonhole principle and (\ref{pp1}, \ref{pp2}), we may assume that $\phi:\widetilde{\mathcal{W}}(L,j,l)\to\phi(\widetilde{\mathcal{W}}(L,j,l))$ is a bijection and elements in $\mathcal{W}(L,j,l):=\phi(\widetilde{\mathcal{W}}(L,j,l))$ have same length $n_{L,j}(l)$ by dropping some elements of $\widetilde{\mathcal{W}}(L,j,l)$.
Hence,
\begin{equation}\label{wnumb1}
	\#\mathcal{W}(L,j,l)\geq(1-\hat{\epsilon}_L)e^{n_{L,j}(l)\left(h_{\mu^{(l)}}(\sigma)-3\epsilon_L\right)}\geq e^{n_{L,j}(l)\left(h_{\mu^{(l)}}(\sigma)-4\epsilon_L\right)}.
\end{equation}
Moreover, according to the definition of $\Gamma_{L,l}$, \cite[Lemma 2.8]{Climenhaga2017}, \cite[Lemma 2.7]{Zhao2018} and formula (\ref{close}), we may assume that for any $w\in\mathcal{W}(L,j,l)$ and $y\in[w]$,
\begin{align}
    W(\delta_{y}^{n_{L,j}(l)},\mu^{(l)})&\leq\epsilon_L,\label{ce}\\
	\bigg| \frac{1}{n_{L,j}(l)}S_{n_{L,j}(l)}(y)-\int f\dif\mu^{(l)} \bigg|&<\epsilon_L. \label{jifen}
\end{align}

Write
\begin{equation*}
	m_{(L,j,l)}:=\sum_{0\leq l'\leq l}n_{L,j}(l'),\ m_{(L,j)}:=m_{(L,j,L)}.
\end{equation*}
Set ${\bf n}_{L,j}=\{ n_{L,j}(l) \}_{0\leq l\leq L}$. We may assume that for any $(L,j,l)\in\mathbb{A}_2$,
\begin{align}
	n_{L,j}(l)\geq\tilde{n}_{L,j},\\
	|\overline{t}({\bf n}_{L,j})-{\bf t}_{L,j}|\leq\frac{\epsilon_L}{L+1}.\label{bijin}
\end{align}
The above can be done by adjusting $\tilde{n}_{L,j}$.
Choose a strictly increasing positive integer sequence $\{T_{L,j} \}_{L\geq 1, 1\leq j\leq J(L)}$ such that for any $(L,j)\in\mathbb{A}_1$,
\begin{align}
	m_{(L,j)}<\frac{1}{L}\cdot\sum_{(L',j')<(L,j)}m_{(L',j')}T_{L',j'};\label{shijian}\\
	\sum_{(L',j')<(L,j)}m_{(L',j')}T_{L',j'}<\epsilon_{L}\cdot T_{L,j}m_{(L,j)}.\label{bizhong}
\end{align}
Denote by $\mathbb{A}_3=\{ (L,j,p,l): L\geq 1, 1\leq j\leq J(L), 1\leq p\leq T_{L,j},  0\leq l\leq L \}$ and also consider the lexicographic order.
Write
\begin{align*}
	M_{(L,j,p,l)}&=\sum_{1\leq k<L}\sum_{j=1}^{J(k)}T_{k,j}m_{(k,j)}+\sum_{1\leq j'<j}T_{L,j'}m_{(L,j')}+(p-1)m_{(L,j)}+m_{(L,j,l)},\\
	&M_{(L,j,p)}=M_{(L,j,p,L)},\ \ M_{(L,j)}=M_{(L,j,T_{L,j})},\ \
	M_L=M_{(L,J(L))}.
\end{align*}

\subsection{Constructing Moran set}
For $(L,j,p,l)\in\mathbb{A}_3$,
set
\begin{equation*}
	\mathcal{C}(L,j,p)=\left\{ u(L,j,p)=\left(u(L,j,p,0), u(L,j,p,1),\ldots,u(L,j,p,L)\right)\in\prod_{l=0}^L\mathcal{W}(L,j,l) \right\},
\end{equation*}
\begin{equation*}
	\mathcal{C}(L,j)=\left\{ u(L,j)=\left( u(L,j,1),\ldots, u(L,j,T_{L,j} )\right)\in\prod_{p=1}^{T_{L,j}}\mathcal{C}(L,j,p) \right\},
\end{equation*}
\begin{equation*}
	\mathcal{C}(L)=\left\{ u(L)= \left( u(L,1),\ldots,u(L,J(L)) \right)\in\prod_{j=1}^{J(L)}\mathcal{C}(L,j) \right\}.
\end{equation*}
Define
\begin{align*}
	\mathcal{D}(L,j,p,l)=\bigg\{ \big(  u(1),\ldots,&u(L-1),u(L,1),\ldots,u(L,j-1),u(L,j,1),\\ &\ldots,u(L,j,p-1),u(L,j,p,1),\ldots,u(L,j,p,l) \big) \bigg\}.
\end{align*}
Then for any $u\in\mathcal{D}(L,j,p,l)$, $|u|=M_{(L,j,p,l)}$. Moreover set
\begin{align*}
    D(L,j,p,l)=\bigcup\left\{ [w]:w\in\mathcal{D}(L,j,p,l) \right\}.
\end{align*}
Finally define
\begin{equation*}
    \Lambda_{L,j,p}=D(L,j,p,L),\ \ 
	\Lambda=\bigcap_{(L,j,p)\in\mathbb{A}_2}\Lambda_{L,j,p}.
\end{equation*}

\subsection{$\Lambda\subset E(\mathcal{T})$}

For fixed $\tilde{L}\geq 1$, ${\bf t}\in A_{\tilde{L}}$ and $\varepsilon>0$, take $L\geq \tilde{L}$ such that $\epsilon_L<{\varepsilon}/{5}$. One can find $1\leq j\leq J(L)$ such that
\begin{equation*}
	|{\bf t}-{\bf t}_{L,j}|<\frac{\epsilon_L}{L+1}.
\end{equation*}
Fix such $L$ and $j$. Combining (\ref{bijin}) we have
\begin{equation*}
	|{\bf t}-\overline{t}({\bf n}_{L,j})|\leq \frac{2\epsilon_L}{L+1}.
\end{equation*}
Hence
\begin{equation}\label{tlj}
W\left(\mu_{\bf t},\sum_{l=0}^L\frac{n_{L,j}(l)}{m_{(L,j)}}\mu^{(l)}\right)<\frac{2\varepsilon}{5}.
\end{equation}
Write $t_0=M_{(L,j)}-T_{L,j}m_{(L,j)}$ and $t_1=M_{(L,j)}$.
\begin{lem}
	For any $x\in\Lambda$, it holds that
	\begin{equation*}
		W\left(\delta_x^{t_1},\sum_{l=0}^L\frac{n_{L,j}(l)}{m_{(L,j)}}\mu^{(l)}\right)<\frac{3\varepsilon}{5}.
	\end{equation*}
\end{lem}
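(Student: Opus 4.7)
The plan is to split the empirical measure $\delta_x^{t_1}$ into two pieces: the portion built from times $[0,t_0)$ and the portion built from times $[t_0,t_1)$, and to argue that the first contributes negligible weight while the second already matches the target measure $\mu^{\ast}:=\sum_{l=0}^{L}\frac{n_{L,j}(l)}{m_{(L,j)}}\mu^{(l)}$ up to error $\epsilon_L$. Setting $\rho:=\frac{1}{t_1-t_0}\sum_{k=t_0}^{t_1-1}\delta_{\sigma^k x}$ one has the convex combination $\delta_x^{t_1}=\frac{t_0}{t_1}\delta_x^{t_0}+\frac{t_1-t_0}{t_1}\rho$, and joint convexity of $W$ together with the trivial bound $W(\mu,\nu)\le 2$ (test functions take values in $[-1,1]$) yields $W(\delta_x^{t_1},\mu^{\ast})\le \frac{2t_0}{t_1}+W(\rho,\mu^{\ast})$.

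To control the prefix weight I would use $t_1-t_0=T_{L,j}m_{(L,j)}$ and $t_0=\sum_{(L',j')<(L,j)}m_{(L',j')}T_{L',j'}$, so that (\ref{bizhong}) gives $t_0<\epsilon_L(t_1-t_0)$, whence $t_0/t_1<\epsilon_L<\varepsilon/5$. For the bulk term I would exploit the block structure built into $\Lambda$: the interval $[t_0,t_1)$ is the disjoint union of $T_{L,j}$ consecutive blocks of length $m_{(L,j)}$, and inside each such block the coordinates of $x$ subdivide into $L+1$ sub-blocks of lengths $n_{L,j}(0),\ldots,n_{L,j}(L)$ whose corresponding words lie in $\mathcal{W}(L,j,0),\ldots,\mathcal{W}(L,j,L)$ respectively (because $x\in\Lambda\subset\Lambda_{L,j,p}$ for every $p$). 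By (\ref{ce}), the length-$n_{L,j}(l)$ empirical measure generated at the start of each such sub-block has Wasserstein distance at most $\epsilon_L$ from $\mu^{(l)}$.

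Averaging over the $T_{L,j}$ blocks and then across sub-block indices $l$, joint convexity of $W$ gives $W(\rho,\mu^{\ast})\le\epsilon_L$, because both $\rho$ and $\mu^{\ast}$ can be written as the same convex combination with weights $n_{L,j}(l)/m_{(L,j)}$, and the corresponding components differ by at most $\epsilon_L$ in $W$. Combining with the prefix estimate yields $W(\delta_x^{t_1},\mu^{\ast})\le 2\epsilon_L+\epsilon_L=3\epsilon_L<3\varepsilon/5$, which is the desired bound. The main chore is just keeping the block and sub-block starting times straight; the only nontrivial tool is the joint convexity of the Wasserstein distance in both arguments, which is immediate from the Kantorovich--Rubinstein formulation used in the preliminaries, so I expect no genuine obstacle beyond careful bookkeeping.
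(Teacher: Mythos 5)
Your proposal is correct and follows essentially the same route as the paper: the paper invokes \cite[Lemma 3.1]{Kiriki2019} to bound $W\bigl(\delta_x^{t_1},\delta_{\sigma^{t_0}(x)}^{t_1-t_0}\bigr)\le \tfrac{2t_0}{t_1}$ (which is exactly your convexity argument for the prefix), controls $t_0/t_1$ by (\ref{bizhong}), and handles the bulk term by the same block decomposition, (\ref{ce}), and convexity of $W$. No substantive difference.
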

\begin{proof}
	According to \cite[Lemma 3.1]{Kiriki2019} we have
	\begin{equation}\label{bizhong2}
		W\left(\delta_{x}^{t_1},\delta_{\sigma^{t_0}(x)}^{t_1-t_0}\right)\leq\frac{2t_0}{t_1}=2\left(1-\frac{T_{L,j}m_{(L,j)}}{M_{(L,j,T_{L,j},L)}}\right).
	\end{equation}
	Now it is sufficiently to estimate $W\left(\delta_{\sigma^{t_0}(x)}^{t_1-t_0},\sum_{l=0}^L\frac{n_{L,j}(l)}{m_{(L,j)}}\mu^{(l)}\right)$. Write $\sigma^{t_0}x|_{T_{L,j}m_{(L,j)}}=\left(u(1,0),\ldots,u(1,L),u(2,0),\ldots,u(2,L)\ldots,u(T_{L,j},L)\right)\in\mathcal{C}(L,j)$. For fixed $1\leq l\leq L$ and $1\leq p\leq T_{L,j}$, according to (\ref{ce}) for any $y_p\in[u(p,l)]$, it holds that $W(\delta_{y_p}^{n_{L,j}(l)},\mu^{(l)})<\epsilon_L$. Hence
	\begin{equation*}
		W\left(\frac{1}{T_{L,j}}\sum_{p=1}^{T_{L,j}}\delta_{y_p}^{n_{L,j}(l)},\mu^{(l)}\right)<\epsilon_L.
	\end{equation*}
	Therefore
	\begin{align*}
		W\bigg(&\delta_{\sigma^{t_0}(x)}^{t_1-t_0},\sum_{l=0}^L\frac{n_{L,j}(l)}{m_{(L,j)}}\mu^{(l)}\bigg)\\
		&=W\left(
		\sum_{l=0}^L\frac{n_{L,j}(l)}{m_{(L,j)}}\left(
		\frac{1}{T_{L,j}}\sum_{p=1}^{T_{L,j}}\delta_{\sigma^{
		t_0+(p-1)m_{(L,j)}+\sum_{l'<l}n_{L,j}(l')	
		} x    }^{n_{L,j}(l)}
		\right),\sum_{l=0}^L\frac{n_{L,j}(l)}{m_{(L,j)}}\mu^{(l)}
		\right)
		<\epsilon_L.
	\end{align*}
	Here we set $\sum_{l'<0}n_{L,j}(l')=0$.
	Combing (\ref{bizhong}, \ref{bizhong2}) we have
	\begin{equation*}
		W(\delta_x^{t_1},\mu_{L,j})<3\epsilon_L<\frac{3\varepsilon}{5}.
	\end{equation*}
\end{proof}

By above lemma and (\ref{tlj}), we conclude that for any $x\in\Lambda$, ${\bf t}\in A_L$  and $\varepsilon>0$, there exists $n$ such that $W(\delta_x^n,\mu_{\bf t})<\varepsilon$. Therefore $\mu_{\bf t}\in V(x)$. Since $L\geq 1$, ${\bf t}$ are arbitrary, we have $\Delta(\mathcal{T})\subset V(x)$ for any $x\in\Lambda$. Hence $\Lambda\subset E(\mathcal{T})$.

\subsection{Constructing measure}

For each $[w]=[u(1),\ldots,u(L^*,j^*,p^*),u(L,j,p,0),u(L,j,p,1),\ldots,u(L,j,p,L)]\in\Lambda_{L,j,p}$, choose one point $x\in[w]$, and let $\mathcal{I}_{L,j,p}$ be the set of all points constructed in this way. According to (\ref{jifen}), it holds that
\begin{align*}
	\bigg| \frac{1}{n_{L,j}(0)}S_{n_{L,j}(0)}f(\sigma^{M_{(L^*,j^*,p^*)}}x)-\int f\dif\mu^{(0)} \bigg|<\epsilon_L,\\
	\bigg| \frac{1}{n_{L,j}(l+1)}S_{n_{L,j}(l+1)}f(\sigma^{M_{(L^*,j^*,p^*)}+m_{(L,j,l)}}x)-\int f\dif\mu^{(l+1)} \bigg|<\epsilon_L,\ l=0,1,\ldots,L-1.
\end{align*}
Hence combining formula (\ref{wnumb1}),
\begin{align*}
	\#\mathcal{I}_{L,j,p}&=\prod_{k=1}^{L-1}\prod_{r=1}^{J(k)}\prod_{l=0}^{k}\#\mathcal{W}(k,r,l)^{T_{k,r}}
	\times
	\prod_{1\leq j'\leq j}\prod_{l=0}^L\#\mathcal{W}(L,j',l)^{T_{L,j'}}\times \prod_{l=0}^L\#\mathcal{W}(L,j,l)^p\\
	&\geq\prod_{k=1}^{L-1}\prod_{r=1}^{J(k)}\prod_{l=0}^{k}e^{n_{k,r}(l)T_{k,r}\left( h_{\mu^{(l)}}(\sigma)-4\epsilon_k \right)  }\times
	\prod_{1\leq j'\leq j}\prod_{l=0}^L e^{n_{L,j'}(l)T_{L,j'}\left( h_{\mu^{(l)}}(\sigma)-4\epsilon_L\right)  }\\
	&\hspace{6.2cm}\times
	\prod_{l=0}^L
	e^{n_{L,j}(l)p\left( h_{\mu^{(l)}}(\sigma)-4\epsilon_L\right)  }\\
	&\geq e^{S_{M_{(L,j,p)}}(x)}e^{ M_{(L,j,p)}\left( P_{\mu^{(0)}}(\sigma,f)-5\epsilon \right) }.
\end{align*}

For $(L,j,p)\in\mathbb{A}_2$ define
\begin{equation*}
	\nu_{L,j,p}=\frac{1}{\#\mathcal{I}_{L,j,p}}\sum_{y\in\mathcal{I}_{L,j,p}}\delta_y.
\end{equation*}
Let $\nu$ be a limit measure for the sequence $\{ \nu_{L,j,p} \}_{(L,j,p)\in\mathbb{A}_2}\subset\mathcal{M}(X)$.
\begin{lem}
	$\nu(\Lambda)=1$.
\end{lem}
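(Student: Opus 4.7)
The plan is to use the nested clopen structure of the sets $\Lambda_{L,j,p}$ together with the portmanteau theorem. The key observation is that the construction is designed so that each approximating measure $\nu_{L,j,p}$ is already supported on $\Lambda_{L,j,p}$, and the cylinder structure forces compatibility between different levels.

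First I would verify that the family $\{\Lambda_{L,j,p}\}_{(L,j,p)\in\mathbb{A}_2}$, ordered lexicographically, is a decreasing sequence of clopen sets. Each $\Lambda_{L,j,p}=D(L,j,p,L)$ is, by definition, a finite union of cylinders of the common length $M_{(L,j,p)}$, so it is both open and closed. Moreover, a point in $\mathcal{D}(L',j',p',L')$ for $(L',j',p')>(L,j,p)$ is an extension of some word in $\mathcal{D}(L,j,p,L)$, since the initial block of length $M_{(L,j,p)}$ is built from exactly the same choices of $u(k,r,q,l)$. Consequently $\Lambda_{L',j',p'}\subset \Lambda_{L,j,p}$ whenever $(L',j',p')>(L,j,p)$.

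Next I would show that $\nu_{L',j',p'}(\Lambda_{L,j,p})=1$ for all $(L',j',p')\ge (L,j,p)$. Each point of $\mathcal{I}_{L',j',p'}$ lies in a cylinder whose first $M_{(L,j,p)}$ symbols form a word of $\mathcal{D}(L,j,p,L)$, so $\mathcal{I}_{L',j',p'}\subset\Lambda_{L,j,p}$, and therefore $\nu_{L',j',p'}$, being a normalized sum of Dirac masses on $\mathcal{I}_{L',j',p'}$, gives full mass to $\Lambda_{L,j,p}$. Since $\Lambda_{L,j,p}$ is clopen, the portmanteau theorem applied to the weak$^*$-convergent subsequence defining $\nu$ yields
\begin{equation*}
\nu(\Lambda_{L,j,p})=\lim_{(L',j',p')\to\infty}\nu_{L',j',p'}(\Lambda_{L,j,p})=1.
\end{equation*}

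Finally, since $\{\Lambda_{L,j,p}\}$ is a decreasing family of measurable sets with $\nu(\Lambda_{L,j,p})=1$ for every index, continuity of measure from above gives
\begin{equation*}
\nu(\Lambda)=\nu\!\left(\bigcap_{(L,j,p)\in\mathbb{A}_2}\Lambda_{L,j,p}\right)=\lim_{(L,j,p)\to\infty}\nu(\Lambda_{L,j,p})=1,
\end{equation*}
as required. The only subtle point is ensuring that the lexicographic order on $\mathbb{A}_2$ is cofinal with the subsequence along which $\nu_{L,j,p}\to\nu$, which is automatic since we pass to a subsequence realizing the weak$^*$ limit; no further estimates are needed beyond the clopen nature of the cylinders and the set-theoretic nesting inherent in the Moran construction.
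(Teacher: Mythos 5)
Your proof is correct and follows essentially the same route as the paper: both establish that the approximating measures $\nu_{L',j',p'}$ give full mass to each fixed $\Lambda_{L,j,p}$ via the nesting of the Moran levels, pass to the weak$^*$ limit using the closed (indeed clopen) nature of these finite unions of cylinders, and conclude by continuity of measure from above on the decreasing intersection. No substantive differences.
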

\begin{proof}
	Suppose that $\nu=\lim\limits_{k\to\infty}\nu_{L_k,j_k,p_k}$. For fixed $(L_0,j_0,l_0)$ and any $(L,j,p)>(L_0,j_0,l_0)$, $\nu_{L,j,p}(\Lambda_{L_0,j_0,l_0})=1$ since $\Lambda_{L,j,p}\subset\Lambda_{L_0,j_0,l_0}$. Thus $\nu(\Lambda_{L_0,j_0,l_0} )\geq\limsup\limits_{k\to\infty}\nu_{L_k,j_k,p_k}(\Lambda_{L_0,j_0,l_0})=1$. It follows that $\nu(\Lambda)=\lim\limits_{k\to\infty}\nu(\Lambda_{L_k,j_k,l_k})=1$.
\end{proof}

Moreover, it is obviously that for any $x\in\Lambda$ and $(L,j,p)>(L_0,j_0,l_0)$, we have
\begin{equation*}
	\nu_{L,j,p}([x]_{M_{(L_0,j_0,l_0)}})\leq\frac{1}{\#\mathcal{I}_{(L_0,j_0,l_0)}}.
\end{equation*}
Hence for any $n\in\mathbb{N}$, take $(L,j,p)\in\mathbb{A}_2$ such that $M_{(L,j,p)}\leq n< M_{(L_\ast,j_\ast,p_\ast)}$,
\begin{equation*}
	\nu([x]_n)\leq\nu([x]_{ M_{(L,j,p)} })\leq\frac{1}{\#\mathcal{I}_{(L,j,p)}}
	\leq
	e^{-S_{M_{(L,j,p)}}f(x)} 
	e^{-M_{(L,j,p)}\left(P_{\mu^{(0)}}(\sigma)-5\epsilon\right) }.
\end{equation*}	
Then
\begin{align*}
	\frac{\log[e^{S_nf(x)}\nu([x]_n)^{-1}]}{n}
	&\geq
	\frac{ S_nf(x)-S_{M_{(L,j,p)}}f(x)+M_{(L,j,p)}(P_{\mu^{(0)}}(\sigma,f)-5\epsilon)} {M_{(L_\ast,j_\ast,p_\ast)}}
	\\
	&\geq\frac{(M_{(L,j,p)}-M_{(L_\ast,j_\ast,p_\ast)})||f||}{M_{(L_\ast,j_\ast,p_\ast)}}
	+\left( P_{\mu^{(0)}}(\sigma,f)-5\epsilon\right)\frac{M_{(L,j,p)}}{M_{(L_\ast,j_\ast,p_\ast)}}
	\\
	&\geq P_{ \mu^{(0)} }(\sigma,f)-6\epsilon.
\end{align*}
The last inequality derives from the following fact
\begin{equation*}
	\lim_{(L,j,p)\to\infty }\frac{M_{(L,j,p)}}{M_{(L_\ast,j_\ast,p_\ast)}}=1.
\end{equation*}
Thus for any $x\in\Lambda$, it holds that
\begin{equation*}
	\liminf_{n\to\infty}
	\frac{\log[e^{S_nf(x)}\nu([x]_n)^{-1}]}{n}\geq P_{ \mu^{(0)} }(\sigma,f)-6\epsilon.
\end{equation*}
According to Theorem \ref{bt},
$P_{\Lambda}(\sigma,f)\geq P_{ \mu^{(0)} }(\sigma,f)-6\epsilon\geq P_{\rm top}(\sigma,f)-7\epsilon$. Since $\epsilon$ is arbitrary and $\Lambda\subset H$, we have
\begin{equation*}
	P_{H}(\sigma,f)= P_{\rm top}(\sigma,f).
\end{equation*}

\section{Applications}

In this section, we consider the Hausdorff dimension induced by potentials, and two types of subshifts as applications of Theorem \ref{key1}.

\subsection{Hausdorff dimension}
Let $(X,\sigma)$ be a shift space and $\varphi\in C(X)$ be a strict positive function. We use a metric defined by Gatzouras and Peres \cite{Gatzouras1997}:
\begin{equation*}
	d_\varphi(x,y)=
	\begin{cases}
	1 & \text{if}\ x_1\neq y_1,\\
	\exp[ -\sup_{z\in [x\wedge y]}S_{|x\wedge y|}\varphi(z) ] & \text{if}\ x\neq y,\\
	0 & \text{if}\ x=y,
	\end{cases}
\end{equation*}
where $x\wedge y$ denotes the word with the largest length such that $x, y\in[x\wedge y]$. Let ${\rm dim}_\varphi(\cdot)$ denote the Hausdorff dimension of measures or subsets induced by metric $d_\varphi$. In \cite{Gatzouras1997} the authors showed that if $\varphi$ is a strictly positive continuous function, then there exists an ergodic measure $\nu$ such that
\begin{equation*}
	{\rm dim}_\varphi(\nu)={\rm dim}_\varphi(X).
\end{equation*}
Together with the definition of Pesin-Pitskel topological pressure, we readily get that for any $Z\subset X$, ${\rm dim}_\varphi(Z)$ is the unique solution of Bowen equation $P_Z(-s\varphi)=0$. According to Theorem \ref{key1}, we have
\begin{prop}
	Let $X$ be a shift space with $\mathcal{L}=\mathcal{L}(X)$. Suppose that $\mathcal{G}\subset\mathcal{L}$ has $(W)$-specification and $\mathcal{L}$ is edit approachable by $\mathcal{G}$. For any strictly positive $\varphi\in C(X)$ with $P_{\rm inf}(\sigma,\varphi)<P_{\rm top}(\sigma,\varphi)$, we have
	\begin{equation*}
		{\rm dim}_\varphi(H)=\dim_\varphi(X).
	\end{equation*}
\end{prop}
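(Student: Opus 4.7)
My plan is to reduce this Hausdorff--dimension identity to Theorem \ref{key1} via the Bowen-type equation $P_Z(\sigma,-s\varphi)=0$ recalled just above the statement. Set $s^\ast:={\rm dim}_\varphi(X)$, so that by Bowen's equation applied to $X$ itself one has $P_{\rm top}(\sigma,-s^\ast\varphi)=P_X(\sigma,-s^\ast\varphi)=0$. If $s^\ast=0$, then monotonicity of Hausdorff dimension in subsets already gives ${\rm dim}_\varphi(H)\le{\rm dim}_\varphi(X)=0$ and we are done, so I may assume $s^\ast>0$. Because $\varphi>0$ forces $s\mapsto P_{\rm top}(\sigma,-s\varphi)$ to be strictly decreasing in $s$, the identity $P_{\rm top}(\sigma,-s^\ast\varphi)=0$ with $s^\ast>0$ entails $h_{\rm top}(\sigma,X)=P_{\rm top}(\sigma,0)>0$.

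The core step is to apply Theorem \ref{key1} to the continuous potential $f:=-s^\ast\varphi$. This requires verifying $P_{\rm inf}(\sigma,-s^\ast\varphi)<P_{\rm top}(\sigma,-s^\ast\varphi)=0$. I would produce a witnessing measure by invoking Theorem \ref{t:entropydense} at the endpoint $\alpha=0\in[0,h_{\rm top}(\sigma,X))$, obtaining an ergodic measure $\mu_0\in\mathcal{M}^e_\sigma(X)$ with $h_{\mu_0}(\sigma)=0$. Strict positivity of $\varphi$ on the compact space $X$ then yields
\[
P_{\mu_0}(\sigma,-s^\ast\varphi)=-s^\ast\int\varphi\,\dif\mu_0\le -s^\ast\min_{X}\varphi<0,
\]
which settles the hypothesis of Theorem \ref{key1}.

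Theorem \ref{key1} now delivers $P_H(\sigma,-s^\ast\varphi)=P_{\rm top}(\sigma,-s^\ast\varphi)=0$. Since $\varphi>0$, the map $s\mapsto P_H(\sigma,-s\varphi)$ is again strictly decreasing on the region where it is finite and hence has at most one zero; the Bowen equation for $H$ therefore identifies that unique zero as ${\rm dim}_\varphi(H)$, giving ${\rm dim}_\varphi(H)=s^\ast={\rm dim}_\varphi(X)$. I expect the only real subtlety to be that the hypothesis of Theorem \ref{key1} must be checked for the potential $-s^\ast\varphi$ rather than for $\varphi$ itself (the former does not follow formally from the assumption $P_{\rm inf}(\sigma,\varphi)<P_{\rm top}(\sigma,\varphi)$); fortunately the intermediate-entropy result (Theorem \ref{t:entropydense}) supplies a zero-entropy ergodic measure essentially for free, so this subtlety costs nothing and the remainder is a clean combination of Bowen's equation with Theorem \ref{key1}.
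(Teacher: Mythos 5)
Your proof is correct and follows essentially the same route as the paper: combine the Bowen equation $P_Z(\sigma,-s\varphi)=0$ with Theorem \ref{key1} applied to the potential $-s^\ast\varphi$, where $s^\ast={\rm dim}_\varphi(X)$. The paper gives no more detail than this one-line reduction, so your additional step --- invoking Theorem \ref{t:entropydense} to produce a zero-entropy ergodic measure and thereby verify the hypothesis $P_{\rm inf}(\sigma,-s^\ast\varphi)<0=P_{\rm top}(\sigma,-s^\ast\varphi)$, which indeed does not follow formally from the stated assumption on $\varphi$ --- fills a point the paper silently skips rather than departing from its argument.
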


\subsection{Subshifts}
\noindent\emph{\bf $S$-gap shifts.} For an infinite subset $S\subset\{ 0,1,\ldots \}$, an $S$-gap shift $\Sigma_S$ is a subshift of $\{0,1 \}^\mathbb{Z}$ defined by the rule that the number of $0$'s between consecutive $1$'s is an element in $S$. That is, the language of $\Sigma_S$ is
\begin{equation*}
	\{ 0^n10^{n_1}10^{n_2}1\dots 10^{n_k}10^m:\ 1\leq i\leq k, n_i\in S, n,m\in\mathbb{N} \},
\end{equation*}
together with $\{0^n:n\in\mathbb{N} \}$.

\noindent\emph{\bf $\beta$-shifts.} Fix $\beta>1$, write $\alpha=\{0,1,\ldots,\lceil\beta \rceil-1 \}$, where $\lceil\beta \rceil$ denotes the smallest integer larger than $\beta$. Any $x\in[0,1)$ can be written as
\begin{equation*}
	x=\sum_{n=1}^{\infty}\epsilon_n(x,\beta)\beta^{-n}.
\end{equation*}
Denote by $\epsilon(x,\beta)=(\epsilon_1(x,\beta),\epsilon_2(x,\beta),\ldots)$, then
\begin{equation*}
	\Sigma_\beta=\left\{ \omega\in\mathcal{A}^\mathbb{N}:\ \text{there exists some } x\in[0,1)\ \text{such that } \epsilon(x,\beta)=\omega \right\}.
\end{equation*}
In \cite{Climenhaga2017}, the authors showed that all the subshift factors of $S$-gap and $\beta$-shift satisfy the conditions in Theorem \ref{key1}.

\begin{prop}
	Let $(X,\sigma)$ be any $S$-gap shift $(\Sigma_S,\sigma)$ or $\beta$-shift $(\Sigma_\beta,\sigma)$, $f\in C(X)$ with $P_{\rm inf}(\sigma,f)<P_{\rm top}(\sigma,f)$. Then
	\\
	$(1)$ $h_{\rm top}(\sigma,H)=h_{\rm top}(\sigma, X)$.\\
	$(2)$ 	$P_H(\sigma,f)=P_{\rm top}(\sigma,f)$.\\
	$(3)$ ${\rm dim}_\varphi(H)={\rm dim}_\varphi(X)$ for any strictly positive $\varphi\in C(X)$ with $P_{\rm inf}(\sigma,\varphi)<P_{\rm top}(\sigma,\varphi)$.
	$(4)$ $(X,\sigma)$ has ergodic measures of arbitrary intermediate pressures for $f$.
\end{prop}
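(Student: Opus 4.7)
The plan is to reduce the proposition to the machinery already developed in the paper: parts (1)--(4) correspond, respectively, to the entropy Remark following Theorem \ref{key1}, Theorem \ref{key1} itself, the Hausdorff-dimension Proposition in Subsection 5.1, and Theorem \ref{key2}. All four of these results take exactly the same structural hypothesis, namely the existence of $\mathcal{G} \subset \mathcal{L}$ with $(W)$-specification such that $\mathcal{L}$ is edit approachable by $\mathcal{G}$. Consequently the only genuine task is to verify this hypothesis for $S$-gap shifts and $\beta$-shifts.

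For that verification I would invoke \cite{Climenhaga2017}, where Climenhaga, Thompson and Yamamoto established that both classes of shifts (and indeed all of their subshift factors) fit into this framework; this is already referenced in the sentence immediately preceding the proposition. For reader orientation I would briefly indicate the natural choices of $\mathcal{G}$. For an $S$-gap shift $(\Sigma_S, \sigma)$, take $\mathcal{G}$ to consist of words that start and end with the symbol $1$; two such words can be concatenated by inserting $0^{n_0} 1$ for a fixed $n_0 \in S$, giving $(W)$-specification with gap length $n_0 + 1$, and every word in $\mathcal{L}$ becomes a $\mathcal{G}$-word after deleting a bounded prefix and suffix of $0$'s, yielding edit approachability. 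For a $\beta$-shift $(\Sigma_\beta, \sigma)$, the standard choice is the set of words $w$ with $w 0^\infty \in \Sigma_\beta$, which again carries $(W)$-specification and from which $\mathcal{L}$ is edit approachable by a short truncation.

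Once the structural hypothesis is confirmed, each of (1)--(4) follows by direct quotation of the corresponding earlier result, using the stated nondegeneracy assumption $P_{\rm inf}(\sigma, f) < P_{\rm top}(\sigma, f)$ in (2)--(4) and the analogous condition on $\varphi$ in (3). I do not anticipate any substantive obstacle, since all the heavy lifting has been carried out in Sections 3 and 4; the only mildly delicate point is checking that the $(W)$-specification property holds with a uniform gap length even when $S$ is an unbounded subset of $\mathbb{N}$, which is immediate once one fixes the minimum element of $S$ as the basis for the gap word.
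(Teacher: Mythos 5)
Your proposal is correct and follows exactly the route the paper takes: the proposition is obtained by citing \cite{Climenhaga2017} for the fact that $S$-gap shifts and $\beta$-shifts (indeed all their subshift factors) satisfy the $(W)$-specification and edit-approachability hypothesis, and then quoting Theorem \ref{key1}, the Remark following it, the Hausdorff-dimension proposition of Subsection 5.1, and Theorem \ref{key2}. One small caution about your orientation sketch: a word of an $S$-gap shift may begin or end with an unboundedly long block of $0$'s, so edit approachability is not a matter of deleting a \emph{bounded} prefix and suffix but of controlling the edits by a sublinear mistake function; this does not affect the argument, since the verification is carried by the citation, exactly as in the paper.
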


\section{Appendix: Non-uniform structure does not imply APP}
In this section, we give an example which shows that our results are not included by the results in \cite{Sun2019}.
First, we shows the definition of approximate product property in \cite{Sun2019}.

For a dynamical system $(X,T)$, we call $(X,T)$ has \textit{approximate product property} (or APP for short) if for any $\delta_1,\delta_2,\epsilon>0$, there exists $M(\delta_1,\delta_2,\epsilon)>0$ such that for any $n\ge M(\delta_1,\delta_2,\epsilon)$, any $k\in\mathbb{N}$ and any points $x_1,x_2,\dots,x_k$, there exist a sequence $0=t_0<t_1<\dots<t_k$ with $n\le t_i-t_{i-1}\le (1+\delta_1)n$ for each $i=1,2,\dots,k$ and a point $z\in X$ such that 	$$\#\{0\le j<n:d(T^{t_i+j}z,T^jx_i)>\epsilon\}<\delta_2n$$
for each $i=1,2,\dots,k$.

Next, we will give a class of subshift with non-uniform structure and construct the required example. 
Recall that $\mathcal{A}$ is a finite alphabet. Fix some $M\in\mathbb{N}$, let $\mathcal{A}^M=\mathcal{F}_1\cup\mathcal{F}_2\cup\cdots\cup\mathcal{F}_l$ be a partition of $\mathcal{A}^M$, that is, $\mathcal{F}_i\cap \mathcal{F}_j=\emptyset$ for any $1\le i\neq j\le l$. For each $1\le i\le l$, let $f_i:\mathbb{N}\rightarrow\mathbb{R}$ be an increasing function, called the \emph{frequency function}.
Set $\mathcal{F}=\{(\mathcal{F}_i,f_i)\}_{i=1}^l$, which we call it the \emph{quasi-admissible words}. Define 
$$
\begin{aligned}
X_{\mathcal{F}}=\bigg\{x\in\mathcal{A}^\mathbb{N}:&\text{ for any }n,k\in\mathbb{N}\text{ and }1\le i\le l,\\
&\#\{0\le j<n:x|_{[k+j,k+j+M)}\in\mathcal{F}_i\}\le f_i(n)\bigg\}.
\end{aligned}
$$
If $\mathcal{F}=\{(\mathcal{F}_i,f_i)\}_{i=1}^2$ where $f_1(n)=n$ and $f_2(n)=0$, then $X_\mathcal{F}$ is a shift of finite type where the set of all forbidden words is $\mathcal{F}_2$.

Finally, we give a typical example to show that non-uniform structure does not imply APP.
Set $\mathcal{A}=\{0,1,2\}$ and $\mathcal{A}^2=\mathcal{F}_1\cup\mathcal{F}_2\cup\mathcal{F}_3$ where 
$$\mathcal{F}_1=\{12,21\},\,\mathcal{F}_2=\{00,11,22\}\text{ and } \mathcal{F}_3=\mathcal{A}^2\setminus(\mathcal{F}_1\cup\mathcal{F}_2).$$
Define their corresponding frequency functions
$$f_1(n)=0,f_2(n)=100+\ln n\text{ and }f_3(n)=n.$$

Let $\mathcal{F}=\{(\mathcal{F}_i,f_i)\}_{i=1}^3$ and then
$$
\begin{aligned}
X_{\mathcal{F}}=\bigg\{x\in\{0,1,2\}^\mathbb{N}:
&\text{ for any }n,i\in\mathbb{N},\,x_{i}x_{i+1}\notin\{12,21\},\\
&\#\{0\le j< n:x_{i+j}x_{i+j+1}=00,\,11\text{ or }22\}\ge 100+\ln n
\bigg\}.
\end{aligned}
$$

Set $$\mathcal{G}=\bigcup_{n\in\mathbb{N}}\bigg\{0a_10a_2\cdots0a_n:a_i\in\{1,2\},\,i=1,\dots,n\bigg\}.$$
It is clear that $\mathcal{G}\subset\mathcal{L}(X_{\mathcal{F}})$ and $\mathcal{G}$ has $(W)$-specification. Since $\frac{f_2(n)}{n}$ converges to $0$, $f_2$ is a mistake function and $\mathcal{L}(X_{\mathcal{F}})$ is edit approachable by $\mathcal{G}$. 

\begin{prop}
	Subshift $(X_{\mathcal{F}},\sigma)$ does not have APP.
\end{prop}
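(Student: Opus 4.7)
The plan is to pick, once and for all, $\delta_1 = \delta_2 = \epsilon = 1/4$ and exhibit, for arbitrarily large $n$, a value $k = k_n$ and test points $x_1 = \cdots = x_k \in X_\mathcal{F}$ for which no $z \in X_\mathcal{F}$ can meet the APP shadowing requirement. For each large $n$, let $p = p_n := 2\lfloor n/4\rfloor + 1$, an odd integer close to $n/2$, and define
\[
x^{(n)}_j = \begin{cases} 0 & \text{if } (j<p \text{ and } j\text{ even}) \text{ or } (j\geq p \text{ and } j\text{ odd}),\\ 1 & \text{otherwise}.\end{cases}
\]
Then $x^{(n)}$ reads $0101\cdots 010$ on $[0,p)$, has a single occurrence of $00$ at positions $(p-1,p)$, and then resumes as $10101\cdots$; it contains no pair from $\mathcal{F}_1$ and at most one pair from $\mathcal{F}_2$ in any window, so $x^{(n)} \in X_\mathcal{F}$. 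I take $x_1 = \cdots = x_k = x^{(n)}$ and $k = k_n := \lceil 200 + 2\ln n\rceil$.

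The heart of the argument is a $\mathbb{Z}/2$-valued invariant that records exactly the $\mathcal{F}_2$-violations of a sequence. For a word $y$ over $\{0,1,2\}$, set $\tau(y_j) := 0$ if $y_j = 0$ and $\tau(y_j) := 1$ if $y_j \in \{1,2\}$, and define $\pi_y(j) := \tau(y_j) + j \pmod 2$. Checking the nine two-letter words directly, every pair in $\mathcal{F}_3 = \{01,02,10,20\}$ preserves $\pi$ (since $\tau$ flips by $1$ and $j$ increases by $1$), while every pair in $\mathcal{F}_2 = \{00,11,22\}$ and every pair in $\mathcal{F}_1 = \{12,21\}$ flips $\pi$ (since $\tau$ is unchanged but $j$ increases by $1$). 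Because $y \in X_\mathcal{F}$ contains no $\mathcal{F}_1$-pair, the number of $\mathcal{F}_2$-pairs of $y$ inside any interval equals the number of indices where $\pi_y$ flips there. For our test point one reads off $\pi_{x^{(n)}}(j) = 0$ for $j<p$ and $\pi_{x^{(n)}}(j) = 1$ for $j\geq p$, i.e., a single flip located exactly at $p$.

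Now assume APP provides times $0 = t_0 < t_1 < \cdots < t_k$ with $t_i - t_{i-1} \in [n,(1+\delta_1)n]$ and a point $z \in X_\mathcal{F}$ that shadows each $x_i = x^{(n)}$ on $[t_i, t_i+n)$. Because $\epsilon < 1/2$, the metric on the shift forces $z_{t_i+j} = x^{(n)}_j$ at every matching index $j$, and at any such $j$ one has $\pi_z(t_i+j) = \pi_{x^{(n)}}(j) \oplus (t_i \bmod 2)$. Since $\delta_2 n < n/2 \leq \min(p, n-p)$, each block contains at least one matching index in $[0,p)$ and at least one in $[p,n)$; the two corresponding values of $\pi_z$ are $t_i \bmod 2$ and $(1+t_i) \bmod 2$ respectively, so $\pi_z$ must flip at least once between them, forcing at least one $\mathcal{F}_2$-pair of $z$ inside block $i$. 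Summing over $k$ blocks, $z$ carries at least $k$ $\mathcal{F}_2$-pairs in a window of length $N \leq k(1+\delta_1)n$, while $z \in X_\mathcal{F}$ forces this count to be $\leq f_2(N) = 100 + \ln N \leq 100 + \ln k + \ln((1+\delta_1)n)$. With $k = \lceil 200 + 2\ln n\rceil$, the left-hand side grows like $2\ln n$ while the right-hand side grows like $\ln n$, so the inequality fails for all sufficiently large $n$, contradicting APP. The one delicate step is the invariant in paragraph two; once it is in place, the rigidity of $X_\mathcal{F}$ (whose $\mathcal{F}_2$-budget is merely logarithmic) is overwhelmed by any counterexample in which the number of shadowing blocks grows faster than $\ln n$.
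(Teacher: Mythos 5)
Your proof is correct, and it takes a genuinely different route from the paper's. The paper fixes $k=4$ and uses a test word $\omega=(1(10)^{(p)})^{(m)}(10)^{(r)}$ that already carries $m\approx\ln n$ occurrences of $11$, i.e.\ nearly saturates the $\mathcal{F}_2$-budget; it then pigeonholes to find one shadowing block with few $\mathcal{F}_2$-pairs and runs a delicate alignment argument (most sub-blocks of length $2p+1$ must lie in $\mathcal{W}$ rather than the shifted family $\mathcal{W}'$, forcing many $00$'s at junctions) to contradict that bound. You instead let $k=k_n\approx 2\ln n$ grow (which APP permits, since it quantifies over all $k$), use a test word with a single parity defect, and introduce the $\mathbb{Z}/2$-invariant $\pi_y(j)=\tau(y_j)+j$, which is preserved by $\mathcal{F}_3$-pairs and flipped exactly by $\mathcal{F}_2$-pairs once $\mathcal{F}_1$ is excluded; each shadowed block then forces at least one flip of $\pi_z$, so $z$ accrues at least $k$ $\mathcal{F}_2$-pairs in a window of length $\le k(1+\delta_1)n$, overwhelming the budget $100+\ln N$. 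Your mechanism is more elementary and more robust in the parameters (any $\delta_2<\tfrac12$ works, and only a crude ``one violation per block'' count is needed), at the cost of needing $k$ to grow with $n$; the paper's buys a fixed number of blocks at the cost of heavier bookkeeping. The only blemishes are cosmetic: $\min(p,n-p)$ can be $n/2-2$ rather than $\ge n/2$ (still $>\delta_2 n=n/4$ for large $n$, so the conclusion stands), and the usual off-by-one between the $1$-indexed metric $d(x,y)=\sum_{j\ge1}|x_j-y_j|/2^j$ and your $0$-indexed coordinates, which does not affect the parity computation.
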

\begin{proof}
	First, we choose some word in $X_{\mathcal{F}}$.
	Fix any $\eta,\delta>0$. 
	For any $n\in\mathbb{N}$ large enough such that there exist $p,r\in\mathbb{N}$ with 
	\begin{equation}\label{e:base}
	\eta\lfloor\ln n\rfloor-1>100+\ln 7,\,\,n=(2p+1)\lfloor\ln n\rfloor+2r,\,2<\ln(2p+3)\text{ and }2r<3\lfloor\ln n\rfloor.
	\end{equation}
	Then we have
	\begin{equation}\label{e:base2}
	p>2\text{ and }n<4p\lfloor\ln n\rfloor.
	\end{equation}
	For a word $W$, denote by $W^{(k)}$ be the word concatenated by $k$ words $W$, that is, 
	$$W^{(k)}=\underbrace{W\cdots W}_{k\text{ times}}.$$
	
	Let $m=\lfloor\ln n\rfloor$ and $\omega=(1(10)^{(p)})^{(m)}(10)^{(r)}$. 
	We claim that $[\omega]\cap X_{\mathcal{F}}\neq\emptyset$. Indeed, for any subword $u$ of $\omega$ such that $11$ appears in $u$ for $i+1$ times, $(1(10)^{(p)})^{(i)}11$ must appear in $u$, that is, $|u|\ge i(2p+1)+2$  And $|\omega|=n$. Noticing that $\frac{\ln(i(2p+1)+2)}{i+1}$ is decreasing while $i$ increases, we have 
	$$\frac{\ln|u|}{i+1}\ge\frac{\ln(i(2p+1)+2)}{i+1}\ge \frac{\ln(m(2p+1)+2)}{m+1} \ge \frac{n-2m}{m+1}\ge 1.$$
	This shows that $[\omega]\cap X_{\mathcal{F}}\neq\emptyset$.
	
	Finally, we prove the proposition by contradiction. 
	Suppose that $(X_{\mathcal{F}},\sigma)$ has approximate product property. 
	Choose $\epsilon>0$ such that $d(x,y)>\epsilon$ if and only if $x_0\neq y_0$ for any $x,y\in X_{\mathcal{F}}$. Then for $n\ge M(1,\delta,\epsilon)$ with (\ref{e:base}) holds, there exist some point $x\in X_{\mathcal{F}}$ and $u^1,u^2,u^3,u^4,v,v',v''$ with $|u^1|=|u^2|=|u^3|=|u^4|=n$ and $|v|,|v'|,|v''|<n$ such that $x|_{[0,4n+|v|+|v'|+|v''|)}=u^1vu^2v'u^3v''u^4$ and 
	$$\#\{0\le j<n: u^i_j\neq\omega_j\}<\delta n\text{ for }i=1,2,3,4.$$
	Since $x\in X_{\mathcal{F}}$, then we have 
	$$
	\begin{aligned}
	\#\{0\le j<&4n+|v|+|v'|+|v''|:x_jx_{j+1}=00,\,11\text{ or }22\}\\
	<&100+\ln(4n+|v|+|v'|+|v''|)<100+\ln(7n).
	\end{aligned}
	$$
	Choose $u^i$ such that 
	\begin{equation}\label{e:upperbound}
	\#\{0\le j<n-1:u^i_ju^i_{j+1}=00,\,11\text{ or }22\}\le\frac14(100+\ln(7n))\le \frac14(1+\eta)m.
	\end{equation}
	Set $u^i=v^1v^2\cdots v^m\tau$, $|v^1|=|v^2|=\cdots=|v^m|=2p+1$ and $|\tau|=2r$.
	Let 
	$$\mathcal{W}=\{0a_10\cdots a_p0: a_i\in\{1,2\},i=1,2,\dots,p\}\text{ and }$$
	$$\mathcal{W}'=\{a_00a_1\cdots 0a_p: a_i\in\{1,2\},i=0,1,\dots,p\}.$$ 
	Note that the length of each word in $\mathcal{W}$ and $\mathcal{W}'$ is $2p+1$. Then
	$$\#\{1\le j \le m:v^j\in\mathcal{W}\cup\mathcal{W}'\}\ge m-\frac14(100+\ln(7n))\ge m-\frac14(1+\eta)m,$$
	where the last inequality holds since (\ref{e:base}).
	On the other hand, since $\#\{0\le j<n: u^i_j\neq\omega_j\}<\delta n$, by (\ref{e:base2}), we have
	$$\#\{1\le j \le m:v^j\in\mathcal{W}'\}<\frac1{2p}\cdot\delta n<2\delta m,$$
	which implies that 
	$$\#\{1\le j \le m:v^j\in\mathcal{W}\}\ge (1-\frac14(1+\eta)-2\delta)m.$$
	Then 
	$$\#\{1\le j < m:v^j,v^{j+1}\in\mathcal{W}\}\ge 2(1-\frac14(1+\eta)-2\delta)m-m.$$
	By (\ref{e:upperbound}), 
	$$
	\begin{aligned}
	\frac14(1+\eta)m\ge&\#\{0\le j<n-1:u^i_ju^i_{j+1}=00\}\\
	\ge&\#\{1\le j < m:v^j,v^{j+1}\in\mathcal{W}\}\\
	\ge&2(1-\frac14(1+\eta)-2\delta)m-m\\
	=&(\frac12-\frac\eta2-2\delta)m,
	\end{aligned}
	$$
	which leads to a contradiction by the arbitrariness of $\eta,\delta$. So $(X_{\mathcal{F}},\sigma)$ does not have APP.
\end{proof}

\noindent\textbf{Acknowledgments.} Both authors were supported by NNSF of China (12071222), the first author was also supported by NNSF of China (12101340).

\end{document}